\documentclass[review,12pt,a4paper,3p]{elsarticle}



\usepackage{amsmath}
\usepackage{lipsum,bm}
\usepackage{amsfonts,wasysym}
\usepackage{graphicx}
\usepackage{mathrsfs}
\usepackage{epstopdf}
\usepackage{algorithmic}
\usepackage{amsopn}
\usepackage{amssymb}
\usepackage{enumitem}
\usepackage{url}
\usepackage{hyperref}
\usepackage{subfig}
\usepackage{xcolor}

\ifpdf
\DeclareGraphicsExtensions{.eps,.pdf,.png,.jpg}
\else
\DeclareGraphicsExtensions{.eps}
\fi


\newdefinition{example}{Example}[section]
\newdefinition{setting}{Setting}[section]
\newdefinition{framework}{Framework}[section]
\newdefinition{rem}{Remark}[section]
\newdefinition{defn}{Definition}[section]

\newtheorem{assum}{Assumption}[section]
\newtheorem{thm}{Theorem}[section]

\newtheorem{lem}{Lemma}[section]
\newproof{proof}{Proof}

\numberwithin{equation}{section}

\newcommand{\norm}[1]{\left\Vert#1\right\Vert}
\newcommand{\abs}[1]{\left\vert#1\right\vert}
\newcommand{\set}[1]{\left\{#1\right\}}
\newcommand{\seq}[1]{\left<#1\right>}
\newcommand{\bra}[1]{\left[#1\right]}

\newcommand{\E}[1]{\mathbb{E}\left[#1\right]}

\numberwithin{equation}{section}
\definecolor{softred}{RGB}{178, 34, 34}

\begin{document}
	\begin{frontmatter} 
		\title{Deep Learning Based on Randomized Quasi-Monte Carlo Method for Solving Linear Kolmogorov Partial Differential Equation}

		\author[1]{Jichang Xiao\corref{cor1}}
		\ead{xiaojc19@mails.tsinghua.edu.cn}
		\address[1]{Department of Mathematical Sciences, Tsinghua University,  Beijing 100084, China}
		
		\author[2]{Fengjiang Fu}
		\ead{ffj18@mails.tsinghua.edu.cn}
		
		\address[2]{Department of Mathematical Sciences, Tsinghua University,  Beijing 100084, China}
		
		\author[3]{Xiaoqun Wang}
		\ead{wangxiaoqun@mail.tsinghua.edu.cn}
		
		\address[3]{Department of Mathematical Sciences, Tsinghua University,  Beijing 100084, China}

		\cortext[cor1]{Corresponding author}
		\begin{abstract}
			Deep learning algorithms have been widely used to solve linear Kolmogorov partial differential equations~(PDEs) in high dimensions, where the loss function is defined as a mathematical expectation. We propose to use the randomized quasi-Monte Carlo (RQMC) method instead of the Monte Carlo (MC) method for computing the loss function. In theory, we decompose the  error from empirical risk minimization~(ERM) into the generalization error and the approximation error. Notably, the approximation error is independent of the sampling methods. We prove that the convergence order of the mean generalization error for the RQMC method is $O(n^{-1+\epsilon})$ for arbitrarily small $\epsilon>0$, while for the MC method it is $O(n^{-1/2+\epsilon})$ for arbitrarily small $\epsilon>0$. Consequently, we find that the overall error for the RQMC method is asymptotically smaller than that for the MC method as $n$ increases. Our numerical experiments show that the algorithm based on the RQMC method consistently achieves smaller relative $L^{2}$ error than that based on the MC method.
		\end{abstract}
		\begin{keyword}
			Deep learning, linear Kolmogorov equations, randomized quasi-Monte Carlo, generalization error
		\end{keyword}	
		
	\end{frontmatter}
	
	\section{Introduction}
	Partial differential equations~(PDEs) are important mathematical models to solve problems arising in science, engineering and finance. Classical numerical methods for PDEs are usually based on space partitioning, such as finite differences~\cite{Thomas:2013} and finite elements~\cite{Braess:2007finite}. For high-dimensional PDEs, these methods suffer from the curse of dimensionality, which means that the computational cost grows exponentially as the dimension increases.
	
	Recently, deep learning has achieved considerable progress. Its application to solve PDEs has attracted much attention. Based on minimizing the residual in $L_{2}$-norm, Physics-informed Neural Networks~\cite{Raissi:2019physics} and Deep Galerkin Method~\cite{Sirignano:2018} are proposed. Combining the Ritz method with deep learning, Deep Ritz Method~\cite{Yu:2018} is proposed to solve elliptic PDEs.
	
	In this paper, we consider the linear Kolmogorov PDE, which plays an important role in finance and physics. Beck et al.~\cite{Beck:2018} first proposed a deep learning algorithm to numerically approximate the solution over a full hypercube and performed experiments to demonstrate the effectiveness of the algorithm even in high dimensions. Berner et al.~\cite{Berner:2020b} further developed the deep learning algorithm for solving a parametric family of high-dimensional Kolmogorov PDEs. Berner et al.~\cite{Berner:2020a} and Jentzen et al.~\cite{Jentzen:2021} provided the theoretical results that the deep learning algorithm based on deep artificial neural networks can overcome the curse of dimensionality under some regularity conditions. Richter et al.~\cite{Richter2022} introduced different loss functions to enhance the robustness of the deep learning algorithm for solving Kolmogorov PDEs.
	
	All of these deep learning-based algorithms reformulate the approximation problem into an optimization problem, where the loss function is defined as a stochastic differential equation~(SDE)-based expectation. Researchers usually employ Monte Carlo~(MC) methods to compute the loss function. Quasi-Monte Carlo~(QMC) methods are more efficient numerical integration methods than MC methods~\cite{Niederreiter:1992}. QMC methods choose deterministic points, rather than random points, as sample points. Due to their effectiveness for integration problems, QMC methods are widely used in finance~\cite{Ecuyer:2009}, statistics~\cite{Fang:2002} and other problems. The applications of QMC methods in deep learning have also made some progress. Dick and Feischl~\cite{Dick:2021} applied QMC methods to compress data in machine learning. Liu and Owen~\cite{Liu:2021} introduced randomized QMC~(RQMC) methods in stochastic gradient descent method to accelerate the convergence. Longo et al.~\cite{Longo:2021higher} and Mishra et al.~\cite{Mishra:2021} trained the neural networks by QMC methods and proved that the QMC-based deep learning algorithm is more efficient than the MC-based one for the evaluation of data-to-observables maps.
	
	In this paper, we combine RQMC methods with deep learning for solving linear Kolmogorov PDEs, leading to the RQMC-based deep learning algorithm. To demonstrate the superiority of our proposed algorithm, we analyze how the error from the empirical risk minimization~(ERM) depends on the sampling methods used to simulate training data. Similar to bias-variance decomposition, we decompose the error into the approximation error and the generalization error. The approximation error is independent of the training data. We obtain the convergence rate of mean generalization error with respect to the sample points for both MC methods and RQMC methods. We also conduct numerical experiments to compare the performance of the RQMC-based deep learning algorithm and the MC-based deep learning algorithm for solving specific linear Kolmogorov PDEs.  
	
	This paper is organised as follows. In Section \ref{sec:pre}, we introduce the minimization problem associated with the linear Kolmogorov PDEs and generalize it into a general deep learning Framework \ref{frame:lpwui}, we also introduce preliminary knowledge of the RQMC method and bias-variance type decomposition of the estimation error. In Section \ref{sec:erranalysis}, we obtain the convergence rate of the mean generalization error for different sampling methods and for specific linear Kolmogorov PDEs with affine drift and diffusion. In Section \ref{sec:numeriExp}, we implement the RQMC-based deep learning algorithm and the MC-based deep learning algorithm to solve the Black-Scholes PDEs and heat equations. Section~\ref{sec:conclu} concludes the paper.
	
	\section{Preliminaries}\label{sec:pre}
	\subsection{General deep learning framework for solving linear Kolmogorov PDEs}\label{sec:2.1}
	We consider the linear Kolmogorov PDE
	\begin{equation}
		\label{eq:target}
		\begin{cases}
			(\nabla_{t}u)(t,x) = \frac{1}{2}\mathrm{Trace}\left( \sigma(x)[\sigma(x)]^{*}(\nabla_{x}^{2}u)(t,x)\right) +\seq{\mu(x),(\nabla_{x}u)(t,x)}_{\mathbb{R}^{d}},	\\
			u(0,x) = \varphi(x),\\
		\end{cases}
	\end{equation}
	where $\varphi (x) \in C(\mathbb{R}^{d},\mathbb{R}) $, $\mu(x) \in C(\mathbb{R}^{d},\mathbb{R}^{d}) $ and $\sigma(x) \in C(\mathbb{R}^{d},\mathbb{R}^{d\times d})$. Let $T\in (0,\infty)$ and $u^*(t,x) \in C^{1,2}([0,T]\times\mathbb{R}^{d},\mathbb{R})$ be the solution of~\eqref{eq:target}. The goal is to numerically approximate the endpoint solution $u^{*}(T,\cdot)$ on the hypercube $[a,b]^d$. Beck et al.~\cite{Beck:2018} prove that the target $\left( u^{*}(T,x)\right)_{x\in[a,b]^d} $ solves a minimization problem stated rigorously in Lemma \ref{lemma:vfe}.
	
	\begin{lem}\label{lem:beck}
		\label{lemma:vfe} Let $d \in \mathbb{N}$, $T,L \in (0,\infty)$, $a\in \mathbb{R}$, $b \in (a,\infty)$, let $\mu(x)$ and $\sigma(x)$ in \eqref{eq:target} satisfy for every $x, y \in \mathbb{R}^d$ that 
		$$\norm{\mu(x)-\mu(y)}_{2}+\norm{\sigma(x)-\sigma(y)}_{HS}\leq L\norm{x-y}_{2},$$
		where $\norm{\cdot}_{2}$ is the Euclidean norm and $\norm{\cdot}_{HS}$ is the Hilbert-Schmidt norm. Let the function $u^*(t,x) \in C^{1,2}([0,T]\times\mathbb{R}^{d},\mathbb{R})$ be the solution of~\eqref{eq:target} with at most polynomially growing partial derivatives. Let $(\Omega,\mathcal{F},P)$ be a probability space with a normal filtration $\left( \mathcal{F}_{t}\right) _{t\in [0,T]}$, $\set{B_{t},\mathcal{F}_{t}\mid 0\leq t\leq T}$ be the standard $d$-dimensional Brownian motion, $X: \Omega \to [a,b]^{d}$ be uniformly distributed and $\mathcal{F}_{0}$-measurable. Let $\set{S_{t},\mathcal{F}_{t}\mid 0\leq t\leq T}$ be the continuous stochastic process satisfying that for every $t\in [0,T]$ it holds $\mathbb{P}$-a.s.
		\begin{equation}
			\label{eq:sdeX}
			S_{t} = X + \int_{0}^{t}\mu(S_{u})du + \int_{0}^{t}\sigma(S_{u})dB_{u}.
		\end{equation}
		For every $x\in[a,b]^{d}$, let $\set{S_{t}^{x},\mathcal{F}_{t}\mid 0\leq t\leq T}$ be the continuous stochastic process satisfying that for every $t\in [0,T]$ it holds $\mathbb{P}$-a.s.
		\begin{equation}
			\label{eq:xsde}
			S_{t}^{x} = x + \int_{0}^{t}\mu(S_{u}^{x})du + \int_{0}^{t}\sigma(S_{u}^{x})dB_{u}.	
		\end{equation}
		We define the loss function $\mathcal{L}(f) = \E{\left( f(X)-\varphi(S_{T})\right) ^{2}}$ for every $f \in C([a,b]^{d},\mathbb{R})$. 
		
		Then there exists a unique function $U \in C([a,b]^{d},\mathbb{R})$ such that 
		\begin{equation}
			U = \underset{f \in C([a,b]^{d},\mathbb{R})}{\mathrm{argmin}}\mathcal{L}(f),
			\nonumber
		\end{equation}
		and for every $x\in [a,b]^{d}$ it holds that
		$$U(x) = u^{*}(T,x) = \E{\varphi(S_{T}^{x})}.$$
	\end{lem}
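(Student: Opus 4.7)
The plan is to combine two classical ingredients: the Feynman-Kac representation for the solution of the linear Kolmogorov PDE, and a bias-variance-type decomposition of the squared loss conditioned on the initial point $X$. These two steps together will identify the unique continuous minimizer of $\mathcal{L}$ with the function $x \mapsto \mathbb{E}[\varphi(S_T^x)]$ and simultaneously with $u^*(T,\cdot)$.

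First I would establish the Feynman-Kac representation $u^*(T,x) = \mathbb{E}[\varphi(S_T^x)]$ for every $x \in [a,b]^d$. The idea is to apply It\^o's formula to the process $(u^*(T-s, S_s^x))_{s \in [0,T]}$; the drift and second-order terms collapse thanks to the PDE~\eqref{eq:target}, leaving only a stochastic integral against $B$. Taking expectations and exploiting the Lipschitz bounds on $\mu, \sigma$ (which give standard moment bounds $\mathbb{E}[\sup_{s \le T} \|S_s^x\|_2^p] < \infty$) together with the assumed polynomial growth of $\nabla_x u^*$ will show that the stochastic integral is a true martingale starting at $0$, yielding $\mathbb{E}[\varphi(S_T^x)] = u^*(T,x)$. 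This step is the only place where the technical regularity assumptions genuinely bite, and controlling the growth to upgrade the local martingale to a true martingale is where I expect the most delicate bookkeeping.

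Next, I would introduce $U(x) := \mathbb{E}[\varphi(S_T^x)]$ and rewrite $\mathcal{L}(f)$ by conditioning on $X$. Since $X$ is $\mathcal{F}_0$-measurable, it is independent of the Brownian increments on $(0,T]$, so by a standard factorization argument the regular conditional distribution of $S_T$ given $X=x$ coincides with the law of $S_T^x$. Hence $\mathbb{E}[\varphi(S_T)\mid X] = U(X)$ almost surely. Expanding the square and using the tower property then gives
\begin{equation}
\mathcal{L}(f) \;=\; \mathbb{E}\bigl[(f(X) - U(X))^2\bigr] \;+\; \mathbb{E}\bigl[\mathrm{Var}(\varphi(S_T)\mid X)\bigr],
\nonumber
\end{equation}
in which the second term does not depend on $f$. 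Since $X$ is uniform on $[a,b]^d$, the first term equals $(b-a)^{-d}\int_{[a,b]^d}(f(x)-U(x))^2\,dx$, which is nonnegative and vanishes precisely when $f = U$ Lebesgue-almost everywhere on $[a,b]^d$.

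Finally, to obtain existence and uniqueness of the minimizer in $C([a,b]^d,\mathbb{R})$, I would verify that $U$ is itself continuous on $[a,b]^d$ (which follows from continuous dependence of the SDE~\eqref{eq:xsde} on the initial condition, the continuity of $\varphi$, and dominated convergence using the polynomial moment bounds already invoked). Continuity combined with equality Lebesgue-almost everywhere upgrades to pointwise equality on the whole cube, giving uniqueness. Combining with the Feynman-Kac step from the first paragraph yields $U(x) = u^*(T,x) = \mathbb{E}[\varphi(S_T^x)]$, completing the proof.
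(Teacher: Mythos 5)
Your proposal is correct, but note that the paper itself offers no proof of this lemma: it is quoted verbatim from Beck et al.~\cite{Beck:2018}, where the argument is exactly the one you outline — the $L^2$-projection characterization of the conditional expectation $\mathbb{E}[\varphi(S_T)\mid X]=U(X)$ identifying the unique continuous minimizer, combined with the Feynman--Kac/It\^o argument (with the polynomial-growth and moment bounds used to upgrade the local martingale to a true martingale) identifying $U$ with $u^{*}(T,\cdot)$. So you have reconstructed the standard proof of the cited result rather than diverged from the paper.
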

	
	This lemma shows that the numerical approximation problem is equivalent to the supervised learning problem for input $X$ and label $\varphi(S_{T})$ with quadratic loss function. Suppose that $(X_i,S_{T,i})_{i=1}^n$ are independent and identically distributed~(i.i.d.) samples drawn from the population $(X,S_{T})$, then we can employ the empirical risk minimization (ERM) principle to solve this regression problem by minimizing the empirical risk 
	\begin{equation}
		\label{eq:elossdfn}
		\mathcal{L}_n(f) = \frac{1}{n}\sum_{i=1}^n\left[ f(X_{i})-\varphi(S_{T,i})\right] ^{2}
	\end{equation}
	among the hypothesis class $\mathcal{H}\subseteq C([a,b]^{d},\mathbb{R})$.
	
	In deep learning, we use artificial neural networks as the hypothesis class. 
	\begin{defn}[Artificial neural network]\label{dfn:ann}
		Let $L\geq 2$, $N_{0}, N_{1},\dots,N_{L}\in \mathbb{N}, R\in(0,\infty)$, $\rho(x)\in C(\mathbb{R},\mathbb{R})$. The artificial neural network $\Phi_{\theta}:\mathbb{R}^{N_{0}}\to \mathbb{R}^{N_{L}}$ with the activation function $\rho$, the parameter bound $R$ and the structure $S=(N_{0},\dots,N_L)$ is defined by
		\begin{equation}\label{eq:ANN}
			\Phi_{\theta}(\cdot)=W_{L}\circ\rho \circ W_{L-1}\circ\rho\cdots\circ\rho\circ W_{1}(\cdot), \ \ \theta \in \Theta_{R}, 
		\end{equation}
		where $\set{W_{l}(x) = A_{l}x+B_{l}: A_{l}\in\mathbb{R}^{N_{l}\times N_{l-1}}, B_{l} \in \mathbb{R}^{N_{l-1}}, 1\leq l\leq L}$ are affine transformations, $\theta=((A_{l},B_{l}))_{l=1}^{L}$ denotes the parameters in the network, $\Theta_{R}$ is the parameter space defined by $\Theta_{R}=\set{\theta:\norm{\theta}_{\infty}\leq R}$, and the activation function $\rho$ is applied component-wisely. 
		
		Denote the function class of all artificial neural network \eqref{eq:ANN} by $\mathcal{N}_{\rho,R,S}$, and denote the restriction of $\mathcal{N}_{\rho,R,S}$ on $[a,b]^{d}$ by
		$$\mathcal{N}^{a,b}_{\rho,R,S}=\set{\Phi_{\theta}|_{[a,b]^{d}};\theta \in \Theta_{R}}.$$
	\end{defn} 
	\begin{rem}
		The total number of parameters is $P(S)=\sum_{l=1}^{L}(N_{l}N_{l-1}+N_{l})$, $\theta$ is equivalent to vectors on $\mathbb{R}^{P(S)}$, thus $\Theta_{R}$ is well-defined and is a compact set on $\mathbb{R}^{P(S)}$. Since the composition of continuous functions remains continuous, we have $\mathcal{N}^{a,b}_{\rho,R,S}\subseteq C([a,b]^{d},\mathbb{R})$. In this paper, we choose swish activation function 
		$$\rho(x)=\frac{x}{1+\exp(-x)},$$
		which could improve the performance of deep learning compared to RELU and sigmoid functions \cite{Ramachandran:2017,Rasamoelina:2020}.
	\end{rem}
	
	Since artificial neural networks $\Phi_{\theta}$ is completely determined by its parameters $\theta$, it suffices to search the optimal $\theta$ which minimizes the empirical risk over the artificial neural networks
	$$\underset{f\in \mathcal{N}_{\rho,R,S}^{a,b}}{\mathrm{argmin}}\mathcal{L}_{n}(f) \Rightarrow \underset{\theta \in \Theta_{R}}{\mathrm{argmin}}\mathcal{L}_{n}(\Phi_{\theta}).$$
	Deep learning can be applied to solve the linear Kolmogorov PDE \eqref{eq:target} if we can simulate the training data $(X_i,S_{T,i})_{i=1}^n$.   
	
	For important cases of linear kolmogorov PDEs such as the heat equation and the Black-Scholes PDE, the SDE \eqref{eq:sdeX} can be solved explicitly. The closed form of $S_{T}$ depends only on the initial condition $X$ and the Brownian motion $B_{T}$ which can be simulated by
	\begin{equation}
		\label{eq:specialcase}
		X=a+(b-a)U_{1}, \quad B_{T}=\sqrt{T}\Phi^{-1}(U_{2}),
	\end{equation}
	respectively, where $U_{1},U_{2}$ are uniformly distributed on $(0,1)^{d}$, $\Phi(\cdot)$ is the cumulative distribution function of the standard normal and $\Phi^{-1}(\cdot)$ is the inverse applied component-wisely.
	
	In general cases, we usually approximate $S_{T}$ numerically by Euler–Maruyama scheme. Let $M\in \mathbb{N}$, the discrete process $(S_{j}^{M})_{j=0}^{M}$ is defined by the following recursion
	\begin{equation} \label{eq:EMrecur}
		S_{0}^{M}=X \quad \mathrm{and} \quad S_{m+1}^{M}=S_{m}^{M}+\mu\left(S_{m}^{M}\right)\Delta T+\sigma\left(S_{m}^{M}\right) \left( B_{(m+1) \Delta T}-B_{m \Delta T}\right) , 
	\end{equation}
	where $\Delta T = T/M$ and $0\leq m\leq M-1$. We actually have an approximated learning problem with loss function $$\mathcal{L}^{(M)}(f)=\E{\left( f(X)-\varphi\left(S_{M}^{M}\right) \right) ^{2}}.$$
	Theorem 2 in \cite{Berner:2020b} shows that solving the learning problem with $\mathcal{L}^{(M)}(\cdot)$ does indeed result in a good approximation of the target $u^{*}(T,\cdot)$. 
	
	For the approximated learning problem, since we have for $0\leq m\leq M-1$,
	\begin{equation}
		\label{eq:genecase}
		X=a+(b-a)U_{1},\quad B_{(m+1)\Delta T}-B_{m\Delta T}=\sqrt{\Delta T}\Phi^{-1}(U_{m+2}),
	\end{equation}
	we can simulate $(X,S_{M}^{M})$ by uniformly distributed random variables $U_{1},U_{2},\dots,U_{M+1}$.
	
	By summarizing the \eqref{eq:specialcase} and \eqref{eq:genecase}, we formulate a general framework.
	
	\begin{framework}[Deep learning problem with uniform input]\label{frame:lpwui}
		Let $a\in \mathbb{R}$, $b\in (a,\infty)$, $d,D\in \mathbb{N}$, let $U_{1}$ be uniformly distributed on $(0,1)^{d}$ and $U_{2}$ be uniformly distributed on $(0,1)^{D}$, denote $U = (U_{1},U_{2})$. Assume that the input $X$ and output $Y$ are given by
		\begin{equation}
			\label{eq:xy}
			X = a+(b-a)U_{1} \quad \mathrm{and} \quad Y = F(U),
		\end{equation}
		where $F\in L^{2}([0,1]^{d+D},\mathbb{R}) $. Let $\set{\bm u_{i}=(\bm u_{i,1},\bm u_{i,2}), i\in \mathbb{N}}$ be a sequence of random variables satisfying $\bm u_{i}\sim U$, define the corresponding input data $\bm x_{i}$ and output data $\bm y_{i}$ by
		\begin{equation}
			\bm x_{i} = a+(b-a)\bm u_{i,1} \quad \text{and} \quad \bm y_{i} = F(\bm u_{i}).
		\end{equation}
		Let $n \in \mathbb{N}$, for every $f\in L^{2}([a,b]^{d},\mathbb{R})$, define the risk $\mathcal{L}$ and the empirical risk $\mathcal{L}_{n}$ by 
		\begin{equation}
			\label{eq:dfnloss}
			\mathcal{L}(f) = \E{\left( f(X)-Y\right) ^{2}} \quad \text{and} \quad \mathcal{L}_{n}(f) = \frac{1}{n}\sum_{i=1}^{n}\left[  f(\bm x_{i})-\bm y_{i}\right] ^{2}.
		\end{equation}
		Let $C([a,b]^{d},\mathbb{R})$ be the Banach space of continuous functions with norm $\norm{\cdot}_{\infty}$ defined by 
		$$\norm{f}_{\infty} = \underset{x\in [a,b]^{d}}{\max}\abs{f(x)}.$$ 
		Define the optimizer of risk $\mathcal{L}$ on $C([a,b]^{d},\mathbb{R})$ by
		$$\quad f^{*} = \underset{f\in C([a,b]^{d},\mathbb{R})}{\mathrm{argmin}}\mathcal{L}(f).$$
		Let $L\geq 2, S = \left(d, N_{1},\dots,N_{L-1},1\right)\in \mathbb{N}^{L+1} , R\in(0,\infty)$,  
		\begin{equation}\label{eq:swish}
			\rho(x)=\frac{x}{1+\exp(-x)},
		\end{equation}
		assume the hypothesis class $\mathcal{H}=\mathcal{N}^{a,b}_{\rho,R,S}$ (see Definition \ref{dfn:ann}). Define approximations $f_{\mathcal{H}}$ and $f_{n,\mathcal{H}}$ by 
		\begin{equation}
			f_{\mathcal{H}} = \underset{f\in \mathcal{H}}{\mathrm{argmin}}\mathcal{L}(f) \quad \mathrm{and} \quad f_{n,\mathcal{H}} = \underset{f\in \mathcal{H}}{\mathrm{argmin}}\mathcal{L}_{n}.
		\end{equation}
	\end{framework}
	
	The deep learning problem in Framework \ref{frame:lpwui} considers the input $X$ and output $Y$ with specific dependence \eqref{eq:xy} on the uniform random variable $U$. In classical learning theory \cite{Anthony:1999,Cucker:2007,Shalev:2014}, the output is usually assumed to be bounded. However, in many applications the output $Y$ in \eqref{eq:xy} does not necessarily to be bounded, which means that most theoretical results in learning theory cannot be applied directly for error analysis in Section \ref{sec:erranalysis}.
	
	The classical ERM principle requires i.i.d. data $(\bm u_{i})_{i=1}^{\infty}$ satisfying $\bm u_{i}\sim U$ to compute the empirical risk $\mathcal{L}_{n}(\cdot)$. In this scenario, empirical risk is actually the crude MC estimator of the risk. From the theory of QMC and RQMC methods, there are other simulation methods of the uniform data on the unit cube, thus the uniform sequence $(\bm u_{i})_{i=1}^{\infty}$ in Framework \ref{frame:lpwui} does not have to be independent. If $(\bm u_{i})_{i=1}^{\infty}$ are chosen to be the scrambled digital sequence \cite{Owen:1995}, then the empirical risk $\mathcal{L}_{n}(f)$ is the RQMC estimator of the risk $\mathcal{L}(f)$. 
	
	\subsection{Randomized quasi-Monte Carlo methods}
	Consider the approximation of an integral
	$$I = \int_{[0,1]^d}g(u)du.$$
	MC and QMC estimators both have the form
	$$I_{n} = \frac{1}{n}\sum_{i=1}^{n}g(\bm u_{i}).$$
	MC methods use i.i.d. uniformly distributed sample from $[0,1]^{d}$. Suppose $g(\bm u_{i})$ has finite variance $\sigma^{2}$, then we have 
	$$\E{\abs{I_{n}-I}}\leq \sqrt{\E{\left( I_{n}-I\right)^{2} }}=\sigma n^{-\frac{1}{2}}.$$
	Thus for MC estimator, the convergence rate of the expected error is $O(n^{-1/2})$. For QMC methods, the points $(\bm u_{i})_{i=1}^{n}$ are deterministic, which are highly uniformly distributed on $[0,1]^{d}$. The QMC error bound is given by the Koksma-Hlawka inequality \cite{Niederreiter:1992}
	\begin{equation}\label{eq:koksma}
		\abs{\frac{1}{n}\sum_{i=1}^{n}g(\bm u_{i})-\int_{[0,1]^d}g(u)du}\leq V_{HK}(g)D^{*}(\bm u_{1},\bm u_{2},\dots,\bm u_{n}),
	\end{equation}
	where $V_{HK}(g)$ is the variation of $g$ in the sense of Hardy and Krause, $D^{*}(u_{1},\dots,u_{n})$ is the star discrepancy of the point set $(\bm u_{i})_{i=1}^{d}$. Several digital sequences such as the Sobol' sequence and the Halton sequence are designed to have low star discrepancy with $D^{*}(\bm u_{1},\dots,\bm u_{n}) = O(n^{-1}(\log n)^{d})$. We refer to \cite{Niederreiter:digital,Niederreiter:1992} for the detailed constructions. For digital sequence $(\bm u_{i})_{i=1}^{\infty}$, suppose that $V_{HK}(g)<\infty$, Koksma-Hlawka inequality shows that the convergence rate of the approximation error is $ O(n^{-1}(\log n)^{d})$ which is asymptotically better than $O(n^{-1/2})$ of MC.
	
	Since QMC points are deterministic, the corresponding estimators are not unbiased. We can use RQMC methods which randomize each point to be uniformly distributed while preserving the low discrepancy property, see \cite{Cranley:1976,Owen:1995} for various randomization methods. 
	
	In this paper, we consider the scrambled digital sequence introduced in \cite{Owen:1995}. Let $(\bm u_{i})_{i=1}^{\infty}$ be the scrambled $(t,d)$-sequence in base b, then we have the following properties:
	\begin{enumerate}
		\item[(i)] Every $\bm u_{i}$ is uniformly distributed on $(0,1)^{d}$.
		\item[(ii)]The sequence $(\bm u_{i})_{i=1}^{\infty}$ is a $(t,d)$-sequence in base $b$ with probability 1.
		\item[(iii)] There exists a constant $B$ such that for every $m\in \mathbb{N}$ and $n=b^{m}$ it holds that
		\begin{equation}\label{eq:Koksmabound}
			\mathbb{P}\left\lbrace D^{*}(\bm u_{1},\dots,\bm u_{n})\leq B\frac{(\log n)^{d}}{n}\right\rbrace = 1 .
		\end{equation}
	\end{enumerate}
	We refer the readers to \cite{Dick:2010,Niederreiter:1992,Owen:1995} for details of the $(t,d)$-sequence and the scrambling methods.
	\subsection{Analysis of the estimation error}
	Under the Framework \ref{frame:lpwui}, we are interested in the estimation error $\mathcal{E}(f_{n,\mathcal{H}})$ defined by
	$$\mathcal{E}\left( f_{n,\mathcal{H}}\right) =\mathbb{E}\left[ \left( f_{n,\mathcal{H}}(X)-f^{*}(X)\right) ^{2}\right]= \frac{1}{(b-a)^{d}}\norm{f_{n,\mathcal{H}}-f^{*}}_{L^{2}([a,b]^{d})}^{2}.$$
	The following lemma shows that $\mathcal{E}(f_{n,\mathcal{H}})$ can be expressed in terms of the loss function.
	\begin{lem}\label{lem:excessrisk}
		Under Framework \ref{frame:lpwui}, for every $f\in C([a,b]^{d},\mathbb{R})$, we have
		\begin{equation}
			\E{\left( f(X)-f^{*}(X)\right) ^{2}}=\mathcal{L}(f)-\mathcal{L}(f^{*}). \nonumber
		\end{equation}
	\end{lem}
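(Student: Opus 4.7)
The plan is to derive this as the classical bias--variance decomposition in a regression setting, carried out in two clean steps: an algebraic expansion, followed by a first-order optimality argument that kills a cross term. First, I would write $f(X) - Y = (f(X) - f^*(X)) + (f^*(X) - Y)$, square, and take expectations to obtain
\begin{equation*}
\mathcal{L}(f) - \mathcal{L}(f^*) = \E{(f(X) - f^*(X))^2} + 2\,\E{(f(X) - f^*(X))(f^*(X) - Y)}.
\end{equation*}
All three expectations are finite: $f$ and $f^*$ are continuous on the compact set $[a,b]^d$ hence bounded, and $Y = F(U) \in L^2$ by the assumption $F \in L^2([0,1]^{d+D},\mathbb{R})$ in Framework~\ref{frame:lpwui}, so Cauchy--Schwarz handles the cross term.

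Second, I would show that the cross term vanishes by exploiting the fact that $f^*$ minimizes $\mathcal{L}$ over $C([a,b]^d,\mathbb{R})$. For any $g \in C([a,b]^d,\mathbb{R})$ and $\epsilon \in \mathbb{R}$, the perturbation $f^* + \epsilon g$ lies in $C([a,b]^d,\mathbb{R})$, and a direct expansion gives
\begin{equation*}
\mathcal{L}(f^* + \epsilon g) = \mathcal{L}(f^*) + 2\epsilon\,\E{(f^*(X) - Y)g(X)} + \epsilon^2\,\E{g(X)^2},
\end{equation*}
which is a quadratic in $\epsilon$ that must attain its minimum at $\epsilon = 0$. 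The vanishing of its derivative at $\epsilon = 0$ forces the orthogonality relation $\E{(f^*(X) - Y)g(X)} = 0$ for every continuous $g$. Specializing $g = f - f^*$ eliminates the cross term in the first display and delivers the claimed identity.

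There is no substantive obstacle here beyond bookkeeping of integrability, which is immediate from compactness of $[a,b]^d$ combined with $Y \in L^2$. The only subtlety worth flagging is that the argument treats $f^*$ as an actual element of $C([a,b]^d,\mathbb{R})$ realizing the minimum; this is built into the statement of Framework~\ref{frame:lpwui} and so requires no separate existence proof for the present lemma.
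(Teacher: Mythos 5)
Your proof is correct and is essentially the paper's argument in expanded form: the paper considers the quadratic $q(t)=\mathcal{L}(tf+(1-t)f^{*})=\mathcal{L}(f^{*}+t(f-f^{*}))$, whose minimum at $t=0$ kills the linear term, which is exactly your orthogonality relation specialized to $g=f-f^{*}$. The only difference is presentational (you establish $\E{(f^{*}(X)-Y)g(X)}=0$ for all continuous $g$ and check integrability explicitly), so no further comment is needed.
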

	\begin{proof}
		Consider $q(t)=\mathcal{L}(tf+(1-t)f^{*})$, $q(t)$ is a quadratic function and takes the minimum at $t=0$, thus $q(1)-q(0)$ equal to the coefficient of $t^{2}$ which is exactly $\E{\left( f(X)-f^{*}(X)\right) ^{2}}$. Thus we have
		$$	\mathcal{L}(f)-\mathcal{L}(f^{*}) =q(1)-q(0)=\E{\left( f(X)-f^{*}(X)\right) ^{2}},$$
		the proof completing. $\square$
	\end{proof}
	
	In classical learning theory, there are many theoretical results which establish conditions on the sample size $n$ and the hypothesis class $\mathcal{H}$ in order to obtain an error 
	$$\mathcal{L}(f_{n,\mathcal{H}})-\mathcal{L}(f^{*})\leq \epsilon$$
	with high probability for small $\epsilon>0$, see \cite{Anthony:1999,Berner:2020a,Cucker:2002,Koltchinskii:2011,Shalev:2014}. Those theoretical results rely on the i.i.d. assumptions of the training data and the boundness condition of the output, thus they cannot be applied directly to research on the influence of sampling methods of $(\bm u_{i})_{i=1}^{\infty}$. Without the i.i.d. assumption, we still have the following decomposition
	\begin{eqnarray}\label{eq:errrorComposition}
		\mathcal{L}(f_{n,\mathcal{H}})-\mathcal{L}(f^{*})&=& \mathcal{L}(f_{n,\mathcal{H}})-\mathcal{L}(f_{\mathcal{H}})+\mathcal{L}(f_{\mathcal{H}})-\mathcal{L}(f^{*})\\
		&=&\underbrace{\mathcal{L}(f_{n,\mathcal{H}})-\mathcal{L}(f_{\mathcal{H}})}_{\text{generalization error}}+\underbrace{(b-a)^{-d}\norm{f_{\mathcal{H}}-f^{*}}_{L^{2}([a,b]^{d})}^{2}}_{\text{approximation error}}. \nonumber
	\end{eqnarray}
	
	In consistent with \cite{Beck_2022,Berner:2020a}, we refer the term $\mathcal{L}(f_{n,\mathcal{H}})-\mathcal{L}(f_{\mathcal{H}})$ as the generalization error and $(b-a)^{-d}\norm{f_{\mathcal{H}}-f^{*}}_{L^{2}([a,b]^{d})}^{2}$ as the approximation error. The approximation error depends only on the function class $\mathcal{H}=\mathcal{N}^{a,b}_{\rho,R,S}$, see \cite{Burger:2001error,Guhring:2021approximation,Hornik:1991approximation,Ohn:2019smooth} for approximation property of artificial neural networks. To address the influence of sampling methods, we keep the artificial neural networks fixed and focus on the generalization error. The next lemma establishes a upper bound for the generalization error.
	\begin{lem}\label{lem:upperbound}
		Under Framework \ref{frame:lpwui}, we have
		\begin{eqnarray*}
			0\leq \mathcal{L}(f_{n,\mathcal{H}})-\mathcal{L}(f_{\mathcal{H}})&\leq& \underset{f\in \mathcal{H}}{\mathrm{sup}} \left[\mathcal{L}(f)-\mathcal{L}_{n}(f)\right]+\underset{f\in \mathcal{H}}{\mathrm{sup}} \left[\mathcal{L}_{n}(f)-\mathcal{L}(f)\right]  \\
			&\leq&2\underset{f\in \mathcal{H}}{\mathrm{sup}}\abs{ \mathcal{L}(f)-\mathcal{L}_{n}(f)}
		\end{eqnarray*}
		
	\end{lem}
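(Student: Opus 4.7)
The plan is to carry out the standard three-term symmetrization decomposition that is ubiquitous in learning theory, applied here without any i.i.d.\ hypothesis because the bound is purely deterministic once the samples $(\bm u_{i})_{i=1}^{n}$ are fixed.

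For the lower bound $0\le \mathcal{L}(f_{n,\mathcal{H}})-\mathcal{L}(f_{\mathcal{H}})$ I would simply invoke the definition of $f_{\mathcal{H}}$ as the minimizer of $\mathcal{L}$ over $\mathcal{H}$: since $f_{n,\mathcal{H}}\in\mathcal{H}$, we have $\mathcal{L}(f_{n,\mathcal{H}})\ge \mathcal{L}(f_{\mathcal{H}})$.

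For the upper bound I would insert and subtract the empirical risk at both $f_{n,\mathcal{H}}$ and $f_{\mathcal{H}}$, writing
\begin{equation*}
\mathcal{L}(f_{n,\mathcal{H}})-\mathcal{L}(f_{\mathcal{H}})
= \bigl[\mathcal{L}(f_{n,\mathcal{H}})-\mathcal{L}_{n}(f_{n,\mathcal{H}})\bigr]
+ \bigl[\mathcal{L}_{n}(f_{n,\mathcal{H}})-\mathcal{L}_{n}(f_{\mathcal{H}})\bigr]
+ \bigl[\mathcal{L}_{n}(f_{\mathcal{H}})-\mathcal{L}(f_{\mathcal{H}})\bigr].
\end{equation*}
The middle bracket is $\le 0$ since $f_{n,\mathcal{H}}$ minimizes $\mathcal{L}_{n}$ on $\mathcal{H}$. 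The first bracket is bounded by $\sup_{f\in\mathcal{H}}\bigl[\mathcal{L}(f)-\mathcal{L}_{n}(f)\bigr]$ because $f_{n,\mathcal{H}}\in\mathcal{H}$, and the third bracket is bounded by $\sup_{f\in\mathcal{H}}\bigl[\mathcal{L}_{n}(f)-\mathcal{L}(f)\bigr]$ because $f_{\mathcal{H}}\in\mathcal{H}$. This yields the middle inequality. The final inequality follows immediately by noting that each of the two suprema is dominated by $\sup_{f\in\mathcal{H}}|\mathcal{L}(f)-\mathcal{L}_{n}(f)|$.

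There is essentially no hard step here; the argument is deterministic and does not use any property of the sampling scheme, which is precisely why the lemma will be the starting point for treating both the MC and RQMC estimators on the same footing in Section \ref{sec:erranalysis}. The only mild caveat I would double-check is the well-definedness of the argmins $f_{\mathcal{H}}$ and $f_{n,\mathcal{H}}$, but this is guaranteed by continuity of $\mathcal{L}$ and $\mathcal{L}_n$ in $\theta$ together with the compactness of $\Theta_{R}$ from Definition \ref{dfn:ann}.
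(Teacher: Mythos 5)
Your argument is exactly the paper's proof: the lower bound from $f_{\mathcal{H}}$ minimizing $\mathcal{L}$ over $\mathcal{H}$, the three-term insertion of $\mathcal{L}_{n}$ at $f_{n,\mathcal{H}}$ and $f_{\mathcal{H}}$, discarding the nonpositive middle term via $\mathcal{L}_{n}(f_{n,\mathcal{H}})\leq\mathcal{L}_{n}(f_{\mathcal{H}})$, and bounding the remaining two terms by the suprema. The proposal is correct and matches the paper's proof essentially line for line.
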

	\begin{proof}
		By $f_{\mathcal{H}} = \mathrm{argmin}_{f\in \mathcal{H}}\mathcal{L}(f)$, it is trivial that
		$$\mathcal{L}(f_{\mathcal{H}})\leq\mathcal{L}(f_{n,\mathcal{H}}).$$
		For the upper bounds, we have
		\begin{eqnarray*}
			\mathcal{L}(f_{n,\mathcal{H}})-\mathcal{L}(f_{\mathcal{H}})&=& \mathcal{L}(f_{n,\mathcal{H}})-\mathcal{L}_{n}(f_{n,\mathcal{H}}) + \mathcal{L}_{n}(f_{n,\mathcal{H}})-\mathcal{L}_{n}(f_{\mathcal{H}})+\mathcal{L}_{n}(f_{\mathcal{H}})-\mathcal{L}(f_{\mathcal{H}})\\
			&\leq&\mathcal{L}(f_{n,\mathcal{H}})-\mathcal{L}_{n}(f_{n,\mathcal{H}}) +\mathcal{L}_{n}(f_{\mathcal{H}})-\mathcal{L}(f_{\mathcal{H}})\\
			&\leq&\underset{f\in \mathcal{H}}{\mathrm{sup}} \left[\mathcal{L}(f)-\mathcal{L}_{n}(f)\right]+\underset{f\in \mathcal{H}}{\mathrm{sup}} \left[\mathcal{L}_{n}(f)-\mathcal{L}(f)\right]\\
			&\leq&2\underset{f\in \mathcal{H}}{\mathrm{sup}}\abs{ \mathcal{L}(f)-\mathcal{L}_{n}(f)},
		\end{eqnarray*}
		where the second inequality follows from $\mathcal{L}_{n}(f_{n,\mathcal{H}})\leq\mathcal{L}_{n}(f_{\mathcal{H}})$. $\square$
	\end{proof}

	\section{Main Results}\label{sec:erranalysis}
	In this section, we obtain the convergence rate with respect to the sample size $n$ of the mean generalization error for MC and RQMC methods. The mean generalization error is defined by 
	\begin{equation}
		\label{eq:expectedSampleError}
		\mathbb{E}\left[\mathcal{L}(f_{n,\mathcal{H}})-\mathcal{L}(f_{\mathcal{H}})\right].
	\end{equation}
	Throughout this section, we keep the hypothesis class $\mathcal{H}=\mathcal{N}^{a,b}_{\rho,R,S}$ fixed.
	\subsection{RQMC-based convergence rate}
	By Lemma \ref{lem:upperbound}, it suffices to obtain the convergence rate of the upper bound
	\begin{equation}
		\label{eq:RqmcUpper}
		\mathbb{E}\left[\underset{f\in \mathcal{H}}{\mathrm{sup}}\abs{ \mathcal{L}(f)-\mathcal{L}_{n}(f)}\right] .
	\end{equation}
	Assume that we choose the uniform sequence $(\bm u_{i})_{i=1}^{\infty}$ in Framework \ref{frame:lpwui} to be the scrambled digital sequence, then \eqref{eq:RqmcUpper} equals to the mean supreme error of scrambled net quadrature rule that is 
	$$\E{\underset{\theta \in \Theta_{R}}{\mathrm{sup}}\abs{\frac{1}{n}\sum_{i=1}^n g_{\theta}( \bm u_i)-\int_{[0,1]^d}g_{\theta}( u) d  u}},$$
	the function $g_{\theta}(u)$ is defined by 
	\begin{equation}
		\label{eq:paraClass}
		g_{\theta}(u)=\left[\Phi_{\theta}\left( a+(b-a)u_{1:d}\right) -F(u) \right]^{2},
	\end{equation}
	where $u_{1:d}$ denotes the first $d$ components of $u$ and $\Phi_{\theta} \in \mathcal{H}$ is the artificial neural network.
	In most cases, the functions $g_{\theta}(u)$ are unbounded and cannot be of bounded variation in the sense of Hardy and Krause. For unbounded functions satisfying the boundary growth condition, we can still obtain the convergence rate of the mean error (or even root mean squared error), see \cite[Theorem 3.1]{He:2023error} and \cite[Theorem 5.7]{Owen:2006}.  These results cannot be applied directly since we need to handle with the supreme error for a function class, but it is natural to introduce the boundary growth condition for a whole function class.
	
	For a nonempty set $v \subseteq \set{1,\cdots,d}$, denote the partial derivative $(\prod_{i\in v}\partial/\partial x_i)g(u)$ by $D_{v}g(u)$, we make a convention that $D_{\emptyset}g(u) = g(u)$. Let $\bm 1\set{\cdot}$ be an indicator function.
	
	\begin{defn}[Boundary growth condition]
		Let $d\in \mathbb{N}$, suppose $\mathcal{G}$ is a class of real-valued functions defined on $(0,1)^d$. We say that $\mathcal{G}$ satisfies the boundary growth condition with constants $(B_i)_{i=1}^{d}$ if there exists $B\in (0,\infty)$ such that for every $g \in \mathcal{G}$, every subset $v \subseteq \set{1,\cdots,d}$ and every $u=(u_{1},\dots,u_{d}) \in (0,1)^{d}$ it holds that
		\begin{equation}\label{eq:gc}
			\abs{(D_vg)( u)}\leq B\prod_{i=1}^d[\min(u_i,1-u_i)]^{-B_i-\bm 1\{i\in v\}}.
		\end{equation}
	\end{defn}
	
	The next theorem establishes the error rate of mean supreme error for a function class satisfying the boundary growth condition \eqref{eq:gc}.  
	\begin{thm}\label{thm:errqmc}
			Suppose $\mathcal{G}$ is a class of real-valued functions defined on $(0,1)^d$ which satisfies the boundary growth condition \eqref{eq:gc} with constants $(B_i)_{i=1}^{d}$. Suppose that $(\bm u_{i})_{i=1}^{\infty}$ is a scrambled $(t,d)$-sequence in base $b\geq 2$, let sample size $n=b^{m}$, $m\in \mathbb{N}$, then we have
			\begin{equation}
				\E{\underset{g\in \mathcal{G}}{\mathrm{sup}}\abs{\frac{1}{n}\sum_{i=1}^n g( \bm u_i)-\int_{[0,1]^d}g( u) d  u}}=O\left( n^{-1+\max_{i}B_i}(\log n)^{d}\right).
			\end{equation}
	\end{thm}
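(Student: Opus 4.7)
The plan is to handle the unboundedness of the integrands $g_\theta$ near the boundary of the unit cube by a truncation-and-extension argument, exploiting the uniformity of the constant $B$ in the boundary growth condition to commute the supremum over $\theta$ with the QMC error decomposition. Fix a truncation parameter $\eta\in(0,1/2)$ to be optimized at the end; split $(0,1)^d$ into the interior $K_\eta=[\eta,1-\eta]^d$ and the boundary layer $A_\eta=(0,1)^d\setminus K_\eta$; and for each $\theta$ introduce the low-variation extension $\tilde g_\theta(u)=g_\theta(\pi_\eta(u))$, where $\pi_\eta$ is the coordinate-wise clipping onto $[\eta,1-\eta]$. The extension coincides with $g_\theta$ on $K_\eta$, is bounded on $[0,1]^d$, and has finite Hardy--Krause variation, so the QMC error for $g_\theta$ splits into (i) the QMC error for $\tilde g_\theta$ on $[0,1]^d$ and (ii) a residual supported on $A_\eta$.

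For piece (i), apply the Koksma--Hlawka inequality \eqref{eq:koksma} pointwise in $\theta$ to obtain $\bigl|\tfrac{1}{n}\sum_{i=1}^n\tilde g_\theta(\bm u_i)-\int\tilde g_\theta(u)\,du\bigr|\le V_{HK}(\tilde g_\theta)\,D^*(\bm u_1,\ldots,\bm u_n)$. The growth condition \eqref{eq:gc} with uniform constant $B$, combined with anchoring the Vitali variations at the center $(1/2,\ldots,1/2)$ and integrating the derivative bound $\prod_{i\in v}[\min(u_i,1-u_i)]^{-B_i-1}$ over the faces of $K_\eta$, yields a bound on $\sup_\theta V_{HK}(\tilde g_\theta)$ that is a negative power of $\eta$. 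Property (iii) of the scrambled $(t,d)$-sequence then gives $D^*\le B_0(\log n)^d/n$ almost surely, controlling piece (i). For piece (ii), since each $\bm u_i$ is individually uniformly distributed by Property (i), the expectation of the supremum over $\theta$ can be pulled inside because the pointwise envelope $\sup_\theta|g_\theta(u)|\le B\prod_j[\min(u_j,1-u_j)]^{-B_j}$ is uniform in $\theta$; the resulting integral $\int_{A_\eta}\prod_j[\min(u_j,1-u_j)]^{-B_j}\,du$ is of order $\eta^{1-\max_iB_i}$ by the slab decomposition $A_\eta=\bigcup_j\{u_j<\eta\text{ or }u_j>1-\eta\}$ together with the implicit assumption $B_i<1$.

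Balancing the two pieces by choosing $\eta$ as a suitable power of $n^{-1}(\log n)^d$ should recover a rate of the claimed form $O(n^{-1+\max_iB_i}(\log n)^d)$ up to polylogarithmic factors. The main obstacle will be tightening the Hardy--Krause bound: a direct argument only gives $\sup_\theta V_{HK}(\tilde g_\theta)=O(\eta^{-\sum_iB_i})$, which after optimization yields a weaker exponent involving $\sum_iB_i$ rather than only $\max_iB_i$. To close this gap one must either refine the decomposition --- for instance, partitioning $A_\eta$ into dyadic shells and treating each shell with its own coordinate-sensitive variation bound --- or invoke the sharper single-function scrambled-net estimates of \cite[Theorem 5.7]{Owen:2006} and \cite[Theorem 3.1]{He:2023error} and track their constants uniformly in $\theta$ through the uniform constant $B$. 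In either case the supremum over $\theta$ is harmless because every quantity in sight depends on the function class only through $B$ and the exponents $(B_i)_{i=1}^d$.
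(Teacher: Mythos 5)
Your overall architecture is the same as the paper's: truncate away an $\eta$-neighborhood of the boundary, replace $g_\theta$ by a bounded-variation extension, control the extension's quadrature error by Koksma--Hlawka together with the almost-sure discrepancy bound \eqref{eq:Koksmabound}, and control the residual by pulling the supremum over $\theta$ inside the expectation via the uniform envelope $B\prod_j[\min(u_j,1-u_j)]^{-B_j}$ and the fact that each scrambled point is individually uniform. Your treatment of the boundary residual matches part (i) of Lemma \ref{lem:lveproperty} and is fine, and the uniformity in $\theta$ is indeed harmless for the reason you give.

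The genuine gap is the one you flag yourself and do not close: the Hardy--Krause variation of the extension. With the coordinate-wise clipping extension $g_\theta\circ\pi_\eta$, the Vitali variation in full dimension is $\int_{K(\eta)}\abs{D_{1:d}g_\theta}$, which the growth condition only bounds by $O(\eta^{-\sum_i B_i})$; taking $\eta\propto n^{-1}$ then yields $O\bigl(n^{-1+\sum_i B_i}(\log n)^d\bigr)$, strictly weaker than the stated exponent $\max_i B_i$ whenever more than one $B_i$ is positive. The paper avoids this by using a different extension --- Owen's low-variation extension \eqref{eq:lve}, built from the ANOVA-type identity \eqref{eq:anova} anchored at $c=(1/2,\dots,1/2)$ rather than from clipping --- for which the bound $V_{\mathrm{HK}}(g_{\theta,\eta})\leq C_2\eta^{-\max_i B_i}$ is available uniformly in $\theta$; this is exactly part (iii) of Lemma \ref{lem:lveBasicproperty}, imported from \cite{Owen:2006}. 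That is your second proposed escape route, but it is the essential step of the proof and must actually be carried out (or cited), not listed as an option; your first proposed route (dyadic shells) is not what the paper does and is not needed. Two minor further points: no balancing of $\eta$ against $(\log n)^d$ is required --- the paper simply takes $\eta\propto n^{-1}$ and both terms land at $O(n^{-1+\max_i B_i}(\log n)^d)$; and your weaker bound would still suffice for the downstream application in Theorem \ref{thm:rqmcge}, where all $B_i$ are arbitrarily small, but it does not prove Theorem \ref{thm:errqmc} as stated.
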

	\begin{proof}
		To prove this theorem, we need to introduce concept of the low variation extension \cite{Owen:2005,Owen:2006}. Detailed definitions and useful properties can be found in \ref{appendix: lve}. For $g\in \mathcal{G}$ and $\eta \in (0,1/2)$, let $g_{\eta}$ be the low variation extension of $g$, denote $I(g) = \int_{(0,1)^d}g( u) d  u$ and $\hat{I}(g)=\frac{1}{n}\sum_{i=1}^n g(\bm u_i)$. By the triangle inequality, we have
		\begin{eqnarray*}
			\E{\underset{g\in \mathcal{G}}{\mathrm{sup}}\abs{\hat{I}(g)-I(g)}}&\leq& \E{\underset{g\in \mathcal{G}}{\mathrm{sup}}\abs{\hat{I}(g-g_{\eta})}}+\underset{g\in \mathcal{G}}{\mathrm{sup}}\abs{I(g-g_{\eta})}\\
			&&+\E{\underset{g\in \mathcal{G}}{\mathrm{sup}}\abs{\hat{I}(g_{\eta})-I(g_{\eta})}} \\
			&\leq&2C_2\eta^{1-\max_{i}B_i}+\E{\underset{g\in \mathcal{G}}{\mathrm{sup}}\abs{\hat{I}(g_{\eta})-I(g_{\eta})}} ,
		\end{eqnarray*}
		the second inequality follows from $(i)$ of Lemma \ref{lem:lveproperty}. Applying the Koksma-Hlawka inequality \eqref{eq:koksma}, $(iii)$ of Lemma \ref{lem:lveBasicproperty} and the formula $\eqref{eq:Koksmabound}$, we find that 
		\begin{eqnarray*}
			\E{\underset{g\in \mathcal{G}}{\mathrm{sup}}\abs{\hat{I}(g_{\eta})-I(g_{\eta})}}&\leq&\underset{g\in \mathcal{G}}{\mathrm{sup}}V_{\mathrm{HK}}\left( g_{\eta}\right)\E{D^{*}(\bm u_{1},\dots, \bm u_{n})}  \\
			&\leq&\underset{g\in \mathcal{G}}{\mathrm{sup}}V_{\mathrm{HK}}\left( g_{\eta}\right)\times O\left( \frac{(\log n)^{d}}{n}\right)  \\
			&=&O\left( \eta^{-\max_{i}B_i}\right) \times  O\left( \frac{(\log n)^{d}}{n}\right) .
		\end{eqnarray*}
		We can conclude that 
		\begin{equation}
			\E{\underset{g\in \mathcal{G}}{\mathrm{sup}}\abs{\hat{I}(g)-I(g)}} \leq 	O\left( \eta^{1-\max_{i}B_i}\right) +O\left( \eta^{-\max_{i}B_i}\right) \times  O\left( \frac{(\log n)^{d}}{n}\right), \nonumber
		\end{equation}
		taking $\eta \propto n^{-1}$, the upper bound become $O\left( n^{-1+\max_{i}B_i}(\log n)^{d}\right) $. $\square$
	\end{proof}
	
	If the function class satisfies boundary growth condition \eqref{eq:gc} with arbitrarily small $B_i>0$, then the error rate in Theorem \ref{thm:errqmc} becomes $O(n^{-1+\epsilon})$ for arbitrarily small $\epsilon > 0$. To obtain the error rate for the expected generalization error \eqref{eq:expectedSampleError}, we only need to verify that the boundary growth condition is satisfied for the specific function class $\set{g_{\theta}(\cdot)\mid \theta \in \Theta_{R}}$ defined by \eqref{eq:paraClass}. The next lemma provides an easy way to verify the boundary growth condition for a complicated function class.
	
	\begin{lem}\label{lem:addmul}
		Suppose that $\set{g_{\theta}(u);\theta \in \Theta}$ and $\set{h_{\theta}(u);\theta \in \Theta}$ both satisfy the boundary growth condition \eqref{eq:gc} with constants $(B_i)_{i=1}^{d}$. Then  $\set{g_{\theta}(u)+h_{\theta}(u);\theta \in \Theta}$ and $\set{g_{\theta}(u)h_{\theta}(u);\theta \in \Theta}$ also satisfy the boundary growth condition with the same constants $(B_i)_{i=1}^{d}$.
	\end{lem}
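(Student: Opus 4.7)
My plan is to treat the sum and product separately, relying on linearity of $D_v$ and the Leibniz product rule respectively. Both are uniform-in-$\theta$ pointwise bounds, so the uniformity required by the definition of the boundary growth condition is automatic at every step.

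For the sum, $D_v(g_\theta+h_\theta) = D_v g_\theta + D_v h_\theta$, so applying the growth bound to each summand and using the triangle inequality gives $|D_v(g_\theta+h_\theta)(u)| \leq 2B \prod_{i=1}^d [\min(u_i,1-u_i)]^{-B_i-\bm 1\{i \in v\}}$, uniformly in $\theta \in \Theta$. This is exactly the boundary growth condition for the sum class, with the same exponents $(B_i)$ and with $2B$ in place of $B$.

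For the product, I apply the Leibniz rule $D_v(g_\theta h_\theta) = \sum_{w \subseteq v}(D_w g_\theta)(D_{v\setminus w}h_\theta)$ and bound each term via the growth condition on $g$ (with subset $w$) and on $h$ (with subset $v\setminus w$). Since $w$ and $v\setminus w$ partition $v$, the indicator exponents $\bm 1\{i \in w\} + \bm 1\{i \in v\setminus w\}$ recombine into $\bm 1\{i \in v\}$ for each $i$, which correctly reproduces the indicator part of the target bound. The $-B_i$ contributions from $g$ and $h$, however, add coordinate-wise, so the natural term-by-term bound is $|D_v(g_\theta h_\theta)(u)| \leq 2^d B^2 \prod_{i=1}^d [\min(u_i,1-u_i)]^{-2B_i-\bm 1\{i \in v\}}$, uniformly in $\theta$.

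The principal obstacle is precisely the doubling of the base exponents in the product case: the Leibniz expansion cannot be rearranged to recover $(B_i)$ instead of $(2B_i)$, and one can exhibit a concrete obstruction with $d=1$ and $g(u)=h(u)=u^{-B}$, since then $gh=u^{-2B}$ cannot be dominated by any constant multiple of $u^{-B}$ as $u \to 0^+$. Hence the stated conclusion with exponents $(B_i)$ for the product is not deliverable by this method, and the writeup should either state the product case with exponents $(2B_i)$ (more generally, with the coordinate-wise sum of the two classes' exponents) or add a structural hypothesis such as uniform boundedness of one factor near the boundary, which would eliminate the offending $-B_i$ factor in that term.
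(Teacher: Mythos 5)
Your reasoning is sound, and it follows exactly the route the paper takes: linearity of $D_v$ for the sum and the Leibniz expansion $D_v(g_{\theta}h_{\theta})=\sum_{w\subseteq v}(D_w g_{\theta})(D_{v\setminus w}h_{\theta})$ for the product are precisely the two formulas displayed in the paper's proof, which then simply asserts that \eqref{eq:gc} follows with the same constants $(B_i)_{i=1}^{d}$ without carrying out the estimate. You carried it out, and your accounting is correct: the indicator exponents recombine as $\bm 1\{i\in w\}+\bm 1\{i\in v\setminus w\}=\bm 1\{i\in v\}$, but the base exponents add, so each Leibniz term is bounded by $B^2\prod_{i=1}^{d}[\min(u_i,1-u_i)]^{-2B_i-\bm 1\{i\in v\}}$ and the product class is only shown to satisfy \eqref{eq:gc} with constants $(2B_i)_{i=1}^{d}$ (coordinate-wise sums in general). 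Your one-dimensional counterexample $g(u)=h(u)=u^{-B_1}$ is valid: $g$ satisfies \eqref{eq:gc} with constant $B_1$, while $u^{-2B_1}\leq C\,[\min(u,1-u)]^{-B_1}$ fails as $u\to 0^{+}$. So what you flag is a genuine imprecision in the lemma as literally stated, not a gap in your argument; the paper's two-line proof glosses over the doubling.

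It is worth seeing why the slip is harmless downstream, which your proposed repair makes explicit. The lemma is invoked only in the regime where the growth constants can be taken arbitrarily small: in Theorem \ref{thm:rqmcge} the network class $\mathcal{H}_{1}$ has all partial derivatives uniformly bounded on the compact set $\Theta_{R}\times[0,1]^{d}$, hence satisfies \eqref{eq:gc} with every $B_i>0$, and $F$ is assumed to satisfy \eqref{eq:gc} with arbitrarily small constants; given a target tuple $(B_i)$ one applies the hypothesis with $(B_i/2)$ to each factor, and the doubling is absorbed. Restating the product half of the lemma with the summed exponents, as you suggest, is the correct fix and leaves Theorems \ref{thm:rqmcge} and \ref{thm:mcerrbound} and Lemma \ref{lem:checkBGC} intact; your alternative hypothesis (one factor with bounded derivatives near the boundary) is exactly the special case of exponents $0$ for that factor, which is the situation actually arising in the application.
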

	\begin{proof}
		For every subset $v$ of indices and $\theta \in \Theta$, the partial derivatives of $g_{\theta}+h_{\theta}$ and $g_{\theta}h_{\theta}$ are given by
		\begin{eqnarray*}
			D_v\left(g_{\theta}+h_{\theta} \right) &=& D_v\left(g_{\theta} \right)+D_v\left(h_{\theta} \right) \\
			D_v\left(g_{\theta}h_{\theta} \right) &=& \sum_{w \subseteq v}D_w\left(g_{\theta} \right)D_{v-w}\left(h_{\theta} \right). 
		\end{eqnarray*}
		We can use the above formulas to prove that the partial derivatives of $g_{\theta}+h_{\theta}$ and $g_{\theta}h_{\theta}$ also satisfy the inequality \eqref{eq:gc} with constants  $(B_i)_{i=1}^{d}$. $\square$
	\end{proof}
	
	Based on Theorem \ref{thm:errqmc} and Lemma \ref{lem:addmul}, we can obtain the convergence rate of the mean generalization error if the data $(\bm u_{i})_{i=1}^{\infty}$ is the scrambled digital sequence.
	
	\begin{thm}\label{thm:rqmcge}
		Under Framework \ref{frame:lpwui}, suppose that the function $F(\cdot)$ satisfies the boundary growth condition \eqref{eq:gc} with arbitrarily small $(B_i)_{i=1}^{d+D}$. Suppose that $( \bm u_{i})_{i=1}^{\infty}$ is a scrambled $(t,d+D)$-sequence in base $b\geq 2$. Let sample size $n=b^{m}$, $m\in \mathbb{N}$, then for arbitrarily small $\epsilon>0$ we have 
		\begin{equation}
			\mathbb{E}\left[ \mathcal{L}(f_{n,\mathcal{H}})-\mathcal{L}(f_{\mathcal{H}})\right]  = O(n^{-1+\epsilon}).
		\end{equation}
	\end{thm}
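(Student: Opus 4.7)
The plan is to reduce the theorem to a direct application of Theorem \ref{thm:errqmc} to the parametric family $\{g_\theta\}_{\theta \in \Theta_R}$ defined in \eqref{eq:paraClass}. By Lemma \ref{lem:upperbound},
$$\mathbb{E}\!\left[\mathcal{L}(f_{n,\mathcal{H}}) - \mathcal{L}(f_{\mathcal{H}})\right] \leq 2\,\mathbb{E}\!\left[\sup_{\theta \in \Theta_R} \left|\frac{1}{n}\sum_{i=1}^n g_\theta(\bm u_i) - \int_{(0,1)^{d+D}} g_\theta(u)\,du\right|\right],$$
with $g_\theta(u) = [\tilde\Phi_\theta(u) - F(u)]^2$ and $\tilde\Phi_\theta(u) := \Phi_\theta(a+(b-a)u_{1:d})$. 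If $\{g_\theta\}$ satisfies the boundary growth condition \eqref{eq:gc} with arbitrarily small constants, then Theorem \ref{thm:errqmc} yields $O(n^{-1+\max_i B_i}(\log n)^{d+D})$, and the polylogarithmic factor can be absorbed into an arbitrarily small $\epsilon > 0$.

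The remaining work is therefore to verify the boundary growth condition for $\{g_\theta\}$. Writing $g_\theta = \tilde\Phi_\theta^2 - 2\tilde\Phi_\theta F + F^2$ and iterating Lemma \ref{lem:addmul}, it suffices to check the condition separately for $\{\tilde\Phi_\theta\}$ and for $F$ with a common constant vector $(B_i)$. The hypothesis handles $F$ directly. For the neural network family I intend to show something stronger: $\{\tilde\Phi_\theta\}_{\theta \in \Theta_R}$ satisfies \eqref{eq:gc} with $B_i = 0$ for all $i$. Since $\tilde\Phi_\theta$ depends only on $u_{1:d}$, every partial derivative $D_v \tilde\Phi_\theta$ with $v \not\subseteq \{1,\ldots,d\}$ vanishes; for $v \subseteq \{1,\ldots,d\}$, the network is a composition of affine maps with the $C^\infty$ swish activation $\rho(x)=x/(1+e^{-x})$, so $D_v \tilde\Phi_\theta(u)$ is jointly continuous in $(\theta,u) \in \Theta_R \times [0,1]^d$ and hence uniformly bounded by compactness of $\Theta_R \times [0,1]^d$.

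The main, though essentially routine, obstacle will be the layerwise chain-rule bookkeeping needed to confirm that each $D_v \tilde\Phi_\theta$ is indeed a jointly continuous function of $(\theta,u)$ uniformly bounded on $\Theta_R \times [0,1]^d$. This is done by induction on the depth $L$: the pre-activations of the first layer are affine in $(\theta,u)$ hence continuous and bounded on the compact domain; the swish activation together with all its derivatives is bounded on any bounded input set; and each subsequent affine layer preserves these properties because $\|\theta\|_\infty \leq R$. Once this uniform smoothness is in place, Lemma \ref{lem:addmul} combines $\{\tilde\Phi_\theta\}$ with the hypothesis on $F$ to equip $\{g_\theta\}$ with arbitrarily small boundary growth constants, and Theorem \ref{thm:errqmc} then assembles the final $O(n^{-1+\epsilon})$ bound.
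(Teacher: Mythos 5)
Your proposal is correct and follows essentially the same route as the paper: bound the generalization error via Lemma \ref{lem:upperbound}, verify the boundary growth condition for the network class $\{\Phi_\theta(a+(b-a)u_{1:d})\}$ by smoothness of the swish network in $(\theta,u)$ and compactness of $\Theta_R\times[0,1]^d$, combine with the hypothesis on $F$ via Lemma \ref{lem:addmul}, and invoke Theorem \ref{thm:errqmc}. The only cosmetic difference is that you spell out the expansion $g_\theta=\tilde\Phi_\theta^2-2\tilde\Phi_\theta F+F^2$ and the layerwise induction, which the paper leaves implicit.
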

	\begin{proof}
		Under Framework \ref{frame:lpwui}, the function class $\mathcal{N}^{a,b}_{\rho,R,S}$ is the restriction of artificial neural networks on $[a,b]^{d}$. To deal with uniform input, we define the function class $$\mathcal{H}_{1}=\set{h_{\theta}:[0,1]^{d}\to \mathbb{R}\mid h_{\theta}(u)=\Phi_{\theta}(a+(b-a)u),\Phi_{\theta}\in \mathcal{N}^{a,b}_{\rho,R,S},\theta\in \Theta_{R}}.$$
		Due to the smoothness of swish activation function $\rho$, $h_{\theta}(u)$ is a smooth function of $(\theta,u)\in \Theta_{R}\times[0,1]^{d}$. The region $\Theta_{R}\times[0,1]^{d}$ is compact, so there exists constant $B>0$ such that
		for every subset $v \subseteq \set{1,\cdots,d}$ it holds that
		\begin{equation*}
			\underset{(\theta,u)\in \Theta_{R}\times[0,1]^{d}}{\mathrm{sup}}\abs{(D_vh_{\theta})(u)}\leq B.
		\end{equation*}
		Hence $\mathcal{H}_{1}$ satisfies the boundary growth condition with arbitrarily small $B_{i}>0$. By Lemma~\ref{lem:addmul} and the assumption on the function $F$, the following function class 
		$$\set{g_{\theta}:[0,1]^{d+D}\to \mathbb{R}\mid g_{\theta}(u)=\left[\Phi_{\theta}(a+(b-a)u_{1:d})-F(u) \right]^{2},f_{\theta}\in \mathcal{N}^{a,b}_{\rho,R,S},\theta\in \Theta_{R}}$$
		also satisfies the boundary growth condition with arbitrarily small $B_{i}>0$.
		By Lemma~\ref{lem:upperbound} and Theorem \ref{thm:errqmc}, we find that for arbitrarily small $\epsilon >0 $, it holds that
		\begin{eqnarray*}
			\mathbb{E}\left[ \mathcal{L}(f_{n,\mathcal{H}})-\mathcal{L}(f_{\mathcal{H}})\right] &\leq&2\mathbb{E}\left[ \underset{f \in \mathcal{N}^{a,b}_{\rho,R,S}}{\mathrm{sup}}\abs{\mathcal{L}_{n}(f)-\mathcal{L}(f)}\right] \\
			&=&2\mathbb{E}\left[ \underset{\theta\in \Theta_{R}}{\mathrm{sup}}\abs{\frac{1}{n}\sum_{i=1}^n g_{\theta}(\bm u_i)-\int_{(0,1)^d}g_{\theta}( u) d  u}\right] \\
			&=&O(n^{-1+\epsilon}).
		\end{eqnarray*}
		The proof is completed. $\square$
	\end{proof}
	
	For the scrambled digital sequence $(\bm u_{i})_{i=1}^{\infty}$, if the function $F(\cdot)$ satisfies the boundary growth condition \eqref{eq:gc} with arbitrarily small $B_i>0$, Theorem \ref{thm:rqmcge} shows that the mean generalization error converges to $0$ as the sample size $n\to \infty$ and the convergence rate is at least $O(n^{-1+\epsilon})$ for arbitrarily small $\epsilon>0$.
	\subsection{MC-based convergence rate}\label{sec:mc-based-convergence-rate}
	
	Under Framework \ref{frame:lpwui}, it is equivalent to directly simulate the i.i.d. data $(\bm x_{i},\bm y_{i})$ if we use MC methods to simulate i.i.d. uniform sequence $(\bm u_{i})_{i=1}^{\infty}$. In the context of nonparametric regression, theoretical results in \cite{bauer2019deep,schmidt2020nonparametric,jiao2023deep,kohler2021rate} demonstrate that when $f^{*}=f_{\mathcal{H}}$ and $Y=F(U)$ is sub-Gaussian or sub-exponential, it holds that
		\begin{equation}
			\label{eq:Nonpara_results}
			\mathbb{E}\left[\mathcal{L}(f_{n,\mathcal{H}})-\mathcal{L}(f_{\mathcal{H}})\right]=O(n^{-1+\varepsilon}). \nonumber
		\end{equation}
		for every $\varepsilon>0$. In general, the target function $f^{*}$ does not necessarily fall within the hypothesis space  $\mathcal{H}$, and the function $F$ may only satisfy the boundary growth condition \eqref{eq:gc}. For the Deep Ritz method in solving elliptic equations, theoretical results in \cite{Yu:2018,Jiao:2021} demonstrate that the mean generalization error satisfies 
		$$\mathbb{E}\left[\mathcal{L}(f_{n,\mathcal{H}})-\mathcal{L}(f_{\mathcal{H}})\right]=O(n^{-1/2+\varepsilon})$$ 
		for every  $\varepsilon>0$. We can prove that the same convergence rate holds for the general regression problem under Framework \ref{frame:lpwui}.

	\begin{thm}\label{thm:mcerrbound}
		Under Framework \ref{frame:lpwui}, suppose that the function $F$ satisfies the boundary growth condition \eqref{eq:gc} with constants $(B_i)_{i=1}^{d+D}$. Suppose that $( \bm u_{i})_{i=1}^{\infty}$ are i.i.d samples simulated from the uniform distribution on $(0,1)^{d+D}$, then we have
		$$
		\mathbb{E}\left[ \mathcal{L}(f_{n,\mathcal{H}})-\mathcal{L}(f_{\mathcal{H}})\right]  = O(n^{-\frac{1}{2}+\sum_{i}B_i}\left( \log n\right) ^{\frac{1}{2}}).
		$$
		Suppose that the constants $(B_i)_{i=1}^{d+D}$ can be chosen arbitrarily small, then for arbitrarily small $\epsilon>0$ we have
		$$
		\mathbb{E}\left[ \mathcal{L}(f_{n,\mathcal{H}})-\mathcal{L}(f_{\mathcal{H}})\right]  = O(n^{-\frac{1}{2}+\epsilon}).
		$$
	\end{thm}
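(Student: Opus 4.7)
The plan is to adapt the proof of Theorem \ref{thm:grrc}, whose decisive step used $\norm{F}_{\infty}\leq B_{F}$ to pass from $R_{n}(F\cdot\mathcal{H})$ to $B_{F}R_{n}(\mathcal{H})$ via Lemma \ref{lem:rcbound}. Since $F$ now only satisfies the boundary growth condition \eqref{eq:gc}, I would replace it by the indicator truncation $F^{(\eta)}(u):=F(u)\bm 1\{u\in K(\eta)\}$ with $K(\eta)$ as in \eqref{eq:Keta}. Applying \eqref{eq:gc} with $v=\emptyset$ immediately gives the sup-norm bound $\norm{F^{(\eta)}}_{\infty}\leq B\eta^{-\sum_{i}B_{i}}$.

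First, starting from Lemma \ref{lem:upperbound}, I would expand $(f(X)-F(U))^{2}= f^{2}(X)-2f(X)F(U)+F^{2}(U)$ inside both $\mathcal{L}(f)$ and $\mathcal{L}_{n}(f)$. The $F^{2}$ contribution does not depend on $f$, so when the two suprema $\sup_{f}[\mathcal{L}(f)-\mathcal{L}_{n}(f)]$ and $\sup_{f}[\mathcal{L}_{n}(f)-\mathcal{L}(f)]$ are added it cancels, leaving a multiple of $\sup_{f}\abs{\E{f^{2}(X)}-\frac{1}{n}\sum_{i}f^{2}(\bm x_{i})}$ plus a multiple of $\sup_{f}\abs{\E{f(X)F(U)}-\frac{1}{n}\sum_{i}f(\bm x_{i})F(\bm u_{i})}$. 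The first supremum is estimated exactly as in Theorem \ref{thm:grrc} by Lemma \ref{lem:rcbasic} and Lemma \ref{lem:RandANN}, giving $O(n^{-1/2}(\log n)^{1/2})$.

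For the second supremum I would split $F=F^{(\eta)}+(F-F^{(\eta)})$ and use the triangle inequality. The $F^{(\eta)}$ piece, being bounded, is handled by Lemmas \ref{lem:rcbasic}, \ref{lem:rcbound} and \ref{lem:RandANN}, producing at most $2\norm{F^{(\eta)}}_{\infty}R_{n}(\mathcal{H})=O(\eta^{-\sum_{i}B_{i}}n^{-1/2}(\log n)^{1/2})$. For the residual $F-F^{(\eta)}=F\bm 1\{u\notin K(\eta)\}$ I would use the uniform bound $\sup_{f\in\mathcal{H}}\norm{f}_{\infty}\leq B_{1}$, valid because $\Theta_{R}$ is compact and the swish activation is continuous (as already observed in the proof of Theorem \ref{thm:grrc}); then both the population and sample-averaged contributions are at most $B_{1}\E{\abs{F-F^{(\eta)}}}$. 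The boundary-integral calculation that was used in the proof of Lemma \ref{lem:lveproperty}(i), applied to $\abs{F}\leq B\prod_{i}[\min(u_{i},1-u_{i})]^{-B_{i}}$ restricted to $(0,1)^{d+D}\setminus K(\eta)$, yields $\E{\abs{F-F^{(\eta)}}}=O(\eta^{1-\max_{i}B_{i}})$.

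Finally, taking $\eta=n^{-1}$ balances the two pieces of the cross term: $\eta^{-\sum_{i}B_{i}}n^{-1/2}=n^{-1/2+\sum_{i}B_{i}}$, while $\eta^{1-\max_{i}B_{i}}=n^{-1+\max_{i}B_{i}}$ is dominated by it since $\max_{i}B_{i}\leq\sum_{i}B_{i}$. Combining with the $f^{2}$ contribution yields the claimed $O(n^{-1/2+\sum_{i}B_{i}}(\log n)^{1/2})$, and allowing the $B_{i}$ to be arbitrarily small delivers the $O(n^{-1/2+\epsilon})$ statement. The main obstacle is precisely that the unboundedness of $F$ blocks a direct application of Lemma \ref{lem:rcbound}; truncating restores boundedness, but inflates the sup-norm by a factor $\eta^{-\sum_{i}B_{i}}$, and this inflation is exactly what accounts for the extra $\sum_{i}B_{i}$ in the exponent compared to Theorem \ref{thm:grrc}.
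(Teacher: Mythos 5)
Your proposal is correct and follows essentially the same route as the paper: it likewise splits the generalization error into the $f^{2}$ term (handled via Lemmas \ref{lem:rcbasic} and \ref{lem:RandANN}) and the cross term, replaces $F$ by a bounded surrogate agreeing with $F$ on $K(\eta)$ with sup-norm $O(\eta^{-\sum_{i}B_{i}})$ and $L^{1}$-error $O(\eta^{1-\max_{i}B_{i}})$, and then balances $\eta$ against $n$. The only difference is cosmetic: the paper reuses the low variation extension $F_{\eta}$ of \eqref{eq:lve} (through Lemma \ref{lem:lveproperty}) where you use the indicator truncation $F\bm 1\{u\in K(\eta)\}$; since the MC argument never needs the bounded Hardy--Krause variation of the surrogate, your truncation yields the same two estimates with a shorter sup-norm computation, and your choice $\eta\propto n^{-1}$ versus the paper's $\eta\propto n^{-1/2}$ both land within the stated bound $O(n^{-1/2+\sum_{i}B_{i}}(\log n)^{1/2})$.
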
   
	\begin{proof}
		We use the Rademacher complexity technique to prove this theorem, we refer the readers to \ref{appendix:rc} for the definition of Rademacher complexity and some useful properties stated in Lemmas \ref{lem:rcbasic}, \ref{lem:rcbound} and \ref{lem:RandANN}. By Lemma \ref{lem:upperbound}, we have
		\begin{equation}\label{eq:part0}
			\begin{aligned}
				\mathcal{L}(f_{n,\mathcal{H}})-\mathcal{L}(f_{\mathcal{H}})
				&\leq\underset{f \in \mathcal{H}}{\mathrm{sup}}\bra{\mathcal{L}(f)-\mathcal{L}_{n}(f)}+\underset{f\in \mathcal{H}}{\mathrm{sup}}\bra{\mathcal{L}_{n}(f)-\mathcal{L}(f)} \\
				&= \underset{f \in \mathcal{H}}{\mathrm{sup}}\pm\left\lbrace \frac{1}{n}\sum_{i=1}^{n}f^2(\bm x_{i})-\E{f^2( x)}\right\rbrace  \\
				&+\underset{f \in \mathcal{H}}{\mathrm{sup}}\pm\left\lbrace \frac{1}{n}\sum_{i=1}^{n}f(\bm x_{i}) \bm y_i-\E{f( x) y}\right\rbrace \\
				&\leq 2\underset{f \in \mathcal{H}}{\mathrm{sup}}\abs{\frac{1}{n}\sum_{i=1}^{n}f^2( \bm x_{i})-\E{f^2( x)}} \\
				&+2\underset{f \in \mathcal{H}}{\mathrm{sup}}\abs{\frac{1}{n}\sum_{i=1}^{n}f(\bm x_{i})\bm y_i-\E{f( x) y}}. 
			\end{aligned}
		\end{equation}
		
		From Lemma \ref{lem:rcbasic} and \ref{lem:rc_neural}, we have 
		\begin{equation}\label{eq:part1}
			\begin{aligned}
				\E{\underset{f \in \mathcal{H}}{\mathrm{sup}}\abs{\frac{1}{n}\sum_{i=1}^{n}f^2( \bm x_{i})-\E{f^2( x)}}}
				&\leq R_{n}(\mathcal{H}^{2};( x_{i})_{i=1}^{n})\\
				&=O(n^{-\frac{1}{2}}\left( \log n\right)^{\frac{1}{2}}).
			\end{aligned}
		\end{equation}
		
		Let $\eta \in (0,1/2)$ and let the function $F_{\eta}$ be the low variation extension of $F$ from $K(\eta)$ to $(0,1)^{d+D}$. By the triangle inequality, for every function $f \in \mathcal{H}$,
		\begin{eqnarray*}
			&&\abs{\frac{1}{n}\sum_{i=1}^{n}f(\bm x_{i})\bm y_{i}-\E{f( x) y}}\\ 
			&\leq&  \abs{\frac{1}{n}\sum_{i=1}^{n}f(\bm x_{i})F_{\eta}( \bm u_i)-\E{f( x)F_{\eta}( u)}} +\abs{\frac{1}{n}\sum_{i=1}^{n}f( \bm x_{i})\left( F_{\eta}-F\right) (\bm u_i)}\\
			&& + \E{\abs{f( x)\left( F_{\eta}-F\right)( u)}}.	
		\end{eqnarray*}
		By $(i)$ of Lemma \ref{lem:lveproperty} and formula \eqref{eq:neural_property}, we have
		
		\begin{eqnarray*}
			\mathbb{E}\left[ \underset{f \in \mathcal{H}}{\mathrm{sup}}\abs{\frac{1}{n}\sum_{i=1}^{n}f(\bm x_{i})\left( F_{\eta}-F\right) (\bm u_i)}\right] 
			&\leq& B\times\mathbb{E}\left[ \frac{1}{n}\sum_{i=1}^{n}\abs{\left( F_{\eta}-F\right) ( \bm u_i)}\right]\\
			&=&O(\eta^{1-\max_{i}B_i}), \\
			\E{\abs{f( x)\left( F_{\eta}-F\right)( u)}} &\leq& B\times\E{\abs{\left( F_{\eta}-F\right)( u)}}\\
			&=&O(\eta^{1-\max_{i}B_i}).
		\end{eqnarray*}
		
		By Lemmas \ref{lem:lveproperty}, \ref{lem:rcbasic}, \ref{lem:rcbound} and \ref{lem:rc_neural}, we have
		\begin{eqnarray*}
			\mathbb{E}\left[ \underset{f \in \mathcal{H}}{\mathrm{sup}}\abs{\frac{1}{n}\sum_{i=1}^{n}f( \bm x_{i})F_{\eta}(\bm u_i)-\E{f( x)F_{\eta}( u)}}\right] 
			&\leq& \underset{u \in (0,1)^d}{\mathrm{sup}}\abs{F_{\eta}(u)}R_{n}(\mathcal{H};( \bm x_{i})_{i=1}^{n})\\
			&=&O(\eta^{-\sum_{i=1}^d B_i})\times O(n^{-\frac{1}{2}}\left( \log n\right) ^{\frac{1}{2}}).
		\end{eqnarray*}
		
		Summing up three parts, we have
		\begin{equation}\label{eq:part2}
			\begin{aligned}
				&\mathbb{E}\left[ \underset{f \in \mathcal{H}}{\mathrm{sup}}\abs{\frac{1}{n}\sum_{i=1}^{n}f( \bm x_{i})\bm y_i-\E{f( x) y}}\right] \\
				&\leq O(\eta^{1-\max_{i}B_i})+O(\eta^{-\sum_{i=1}^d B_i})\times O(n^{-\frac{1}{2}}\left( \log n\right) ^{\frac{1}{2}}) .
			\end{aligned}
		\end{equation}
		
		Concluding \eqref{eq:part0}, \eqref{eq:part1} and \eqref{eq:part2}, we have
		\begin{eqnarray*}
			&&\mathbb{E}\left[ \mathcal{L}(f_{n,\mathcal{H}})-\mathcal{L}(f_{\mathcal{H}})\right]\\
			&\leq& R_{n}(\mathcal{H}^{2};( x_{i})_{i=1}^{n})+O(\eta^{1-\max_{i}B_i})+O(\eta^{-\sum_{i=1}^d B_i})\times O(n^{-\frac{1}{2}}\left( \log n\right) ^{\frac{1}{2}}) \\
			&=&O(n^{-\frac{1}{2}}\left( \log n\right) ^{\frac{1}{2}})+O(\eta^{1-\max_{i}B_i})+O(\eta^{-\sum_{i=1}^d B_i})\times O(n^{-\frac{1}{2}}\left( \log n\right) ^{\frac{1}{2}}).
		\end{eqnarray*}
		
		Taking the optimal rate $\eta \propto n^{-1/2}$, the inequality becomes
		$$
		\mathbb{E}\left[ \mathcal{L}(f_{n,\mathcal{H}})-\mathcal{L}(f_{\mathcal{H}})\right]= O(n^{-1/2+\sum_{i}B_i}\left( \log n\right) ^{\frac{1}{2}}).
		$$
		The proof is completed. $\square$
	\end{proof}
	
	Assume that we apply MC methods to simulate  i.i.d. uniform data $\bm u_{i}$ (equivalently i.i.d. data $(\bm x_{i},\bm y_{i})$) in Framework \ref{frame:lpwui}. If the function $F$ satisfies the boundary growth condition with arbitrarily small $(B_i)_{i=1}^{d+D}$, Theorem \ref{thm:mcerrbound} shows that the convergence rate of the mean generalization error is at least $O(n^{-1/2+\epsilon})$ for arbitrarily small $\epsilon>0$.
	\subsection{Applications for linear Kolmogorov PDE with affine drift and diffusion}
	As discussed in Section \ref{sec:2.1}, the numerical approximation problem of solutions to linear Kolmogorov PDEs can be reformulated as the deep learning problem satisfying Framework~\ref{frame:lpwui}. In this section, we apply Theorems \ref{thm:rqmcge} and \ref{thm:mcerrbound} to obtain the convergence rate of the mean generalization error for specific linear Kolmogorov PDEs with affine drift and diffusion. In Assumption \ref{assum:driftDiffusion} below, case $(i)$ and $(ii)$ correspond to heat equations and Black-Scholes PDEs, respectively, which are two most important cases of linear Kolmogorov PDEs with affine drift and diffusion.
	\begin{assum}\label{assum:driftDiffusion}
		For the linear Kolmogorov PDE \eqref{eq:target}, suppose that the initial function $\varphi \in C^{\infty}(\mathbb{R}^{d},\mathbb{R})$ has polynomially growing partial derivatives, for the drift $\mu(x) $ and diffusion $\sigma(x) $, we consider the following three cases
		\begin{enumerate}
			\item[(i)]$\mu(x)\equiv \overline{\mu} $ and $ \sigma(x)\equiv \overline{\sigma}$, where $\overline{\mu} \in \mathbb{R}^{d}$ and $\overline{\sigma} \in \mathbb{R}^{d\times d}$.
			\item[(ii)]	$\mu(x)= \mathrm{diag}(x_{1},x_{2},\dots,x_{d})\overline{\mu}$ and $\sigma(x)= \mathrm{diag}(x_{1},x_{2},\dots,x_{d})\overline{\sigma}$, where  $\overline{\mu} \in \mathbb{R}^{d}$ and $\overline{\sigma} \in \mathbb{R}^{d\times d}$.
			\item[(iii)]$\mu(x)$ and $ \sigma(x)$ are both affine transformations .
		\end{enumerate}
	\end{assum}
	
	For case $(i)$ in Assumption \ref{assum:driftDiffusion}, we can solve the SDE \eqref{eq:sdeX} explicitly
	$$S_{T}=\overline{\mu}T+X+\overline{\sigma}B_{T}.$$
	
	Let $\Phi(\cdot)$ be the cumulative distribution function of the standard normal and $\Phi^{-1}(\cdot)$ is the inverse applied component-wisely, then we can write the function $F:(0,1)^{d+d}\to \mathbb{R}$ in Framework \ref{frame:lpwui} as 
	\begin{equation}\label{eq:Fab1}
		F(u)=\varphi\left( \overline{\mu}T+x+\overline{\sigma}z\right),
	\end{equation}
	where
	\begin{equation}
		x=a+(b-a)u_{1:d} \ \ , \ \ z=\sqrt{T}\Phi^{-1}\left( u_{(d+1):2d}\right) .\nonumber
	\end{equation}
	
	For case $(ii)$, we can also solve the SDE \eqref{eq:sdeX} explicitly
	\begin{eqnarray*}
		S_{T}&=&\left(S_{T,1},\dots,S_{T,i}\right), 
	\end{eqnarray*}
	where $$S_{T,i} = X_{i}\exp\left( \left( \mu_{i}-\frac{1}{2}\norm{\sigma_i}_{2}^{2}\right) T+\seq{\sigma_i,B_{T}}_{\mathbb{R}^{d}}\right),$$
	$X_{i}$ denotes the $i$-th component of $X$, $\mu_i$ denotes $i$-th component of $\overline{\mu}$ and $\sigma_i$ denotes $i$-th row of $\overline{\sigma}$. We can write the function $F:(0,1)^{d+d}\to \mathbb{R}$ in Framework \ref{frame:lpwui} as 
	\begin{equation}\label{eq:Fab2}
		F(u)=\varphi \left(s_{1},s_{2},\dots,s_{d}\right), 
	\end{equation}
	where
	\begin{align*}
		s_{i} = x_{i}& \exp\left( \left( \mu_{i}-\frac{1}{2}\norm{\sigma_i}_{2}^{2}\right) T+\seq{\sigma_i,z}_{\mathbb{R}^{d}}\right), \ \ 1\leq i \leq d, \nonumber \\
		x_{i}=a+&(b-a)u_{i},\ \
		z=\sqrt{T}\Phi^{-1}(u_{(d+1):2d}),\ \ 1\leq i \leq d. \nonumber
	\end{align*}
	
	For case $(iii)$, we consider the Euler–Maruyama approximation defined by the following recursion
	\begin{equation}
		S_{0}^{M}=X \quad and \quad S_{m+1}^{M}=S_{m}^{M}+\mu(S_{m}^{M})\frac{T}{M}+\sigma(S_{m}^{M})\left( B_{\frac{(m+1)T}{M}}-B_{\frac{mT}{M}}\right) . \nonumber
	\end{equation}
	For affine transformations $\mu$ and $ \sigma$, there exist polynomials $P_{1},\dots,P_{d}:\mathbb{R}^{d+Md}\to \mathbb{R}$ such that 
	\begin{equation}
		S_{M}^{M}=\left( P_{1}\left( \Delta_{0},\Delta_{1},\dots,\Delta_{M}\right),\dots,P_{1}\left( \Delta_{0},\Delta_{1},\dots,\Delta_{M}\right)\right), \nonumber
	\end{equation}
	where
	\begin{equation}
		\Delta_{0} = X, \ \  \Delta_{i} = B_{\frac{iT}{M}}-B_{\frac{(i-1)T}{M}}, \ \ 1\leq i \leq M. \nonumber \\
	\end{equation}
	Then we can write the function $F:(0,1)^{d+Md}\to \mathbb{R}$ in Framework \ref{frame:lpwui} as 
	\begin{equation}\label{eq:Fab3}
		F(u)=\varphi\left(P_{1} \left( x,z_{1},z_{2},\dots,z_{M}\right) ,\dots, P_{d} \left( x,z_{1},z_{2},\dots,z_{M}\right)\right) ,
	\end{equation}
	where
	\begin{equation}
		x=a+(b-a)u_{1:d} \ \ , \ \ z_{i}=\sqrt{\frac{T}{M}}\Phi^{-1}(u_{(id+1):(id+d)}),\ \ 1\leq i \leq M. \nonumber
	\end{equation}

		\begin{lem}\label{lem:checkBGC}
			Functions $F(\cdot)$ given by \eqref{eq:Fab1}-\eqref{eq:Fab3} both satisfy the boundary growth condition \eqref{eq:gc} with arbitrarily small $B_i>0$. 
		\end{lem}
	
	\begin{proof}
		For the function $F$ given in \eqref{eq:Fab1}, since $\varphi$ has polynomially growing partial derivatives, by chain rule, there exists constant $B>0$ and $r>0$ such that for every subset $v\subseteq \set{1,2,\dots,2d}$ it holds that
		\begin{equation}
			\label{eq:Fabupper1}
			\abs{(D_vF)( u)}\leq B\left( 1+\sum_{i=1}^{d}u_{i}^{r}+\sum_{i=d+1}^{2d}\Phi^{-r}(u_{i})\right) \prod_{\substack{i\in v\\d+1\leq i  }}\frac{\partial \Phi^{-1}(u_{i})}{\partial u_{i}}.
		\end{equation}
		For every $x\in (0,1)$, we have 
		\begin{equation}
			\label{eq:PhiProperty}
			\Phi^{-1}(x)=O\left( \sqrt{\log(\min(x,1-x))}\right) \ \ \mathrm{and} \ \ \frac{\partial \Phi^{-1}(x)}{\partial x}=O\left( \left[ \min(x,1-x)\right]^{-1} \right). 
		\end{equation}
		Summarizing \eqref{eq:Fabupper1} and \eqref{eq:PhiProperty}, we have for arbitrarily small $B_{i}>0$,
		$$\sup\limits_{v\subseteq \set{1,2,\dots,2d}}\abs{(D_vF)( u)}=O\left( \prod_{i=1}^{2d}[\min(u_i,1-u_i)]^{-B_i-1\{i\in v\}}\right) ,$$
		which shows that the function $F$ satisfies the boundary growth condition with arbitrarily small $B_i>0$ .
		
		For the function $F$ given in \eqref{eq:Fab2}, similarly there exists constant $B>0$ such that for every subset $v\subseteq \set{1,2,\dots,2d}$ it holds that
		\begin{equation}
			\label{eq:Fabupper2}
			\abs{(D_vF)( u)}\leq \exp\left( B+B\sum_{i=d+1}^{2d}\Phi^{-1}(u_{i})\right) \prod_{\substack{i\in v\\d+1\leq i  }}\frac{\partial \Phi^{-1}(u_{i})}{\partial u_{i}}.
		\end{equation}
		From \eqref{eq:Fabupper2} and \eqref{eq:PhiProperty}, for arbitrarily small $B_{i}>0$, we have 
		$$\sup\limits_{v\subseteq \set{1,2,\dots,2d}}\abs{(D_vF)( u)}=O\left( \prod_{i=1}^{2d}[\min(u_i,1-u_i)]^{-B_i-1\{i\in v\}}\right) .$$
		
		For the function $F$ given in \eqref{eq:Fab3}, by chain rule and the assumption on $\varphi$, there exists constant $B>0$ and $r>0$ such that for every subset $v\subseteq \set{1,2,\dots,Md+d}$ it holds that
		\begin{equation*}
			\label{eq:Fabupper3}
			\abs{(D_vF)( u)}\leq B\left( 1+\sum_{i=1}^{d}u_{i}^{r}+\sum_{i=d+1}^{Md+d}\Phi^{-r}(u_{i})\right) \prod_{\substack{i\in v\\d+1\leq i  }}\frac{\partial \Phi^{-1}(u_{i})}{\partial u_{i}}.
		\end{equation*}
		From \eqref{eq:PhiProperty}, we can prove that $F$ satisfies the boundary growth condition with arbitrarily small $B_i>0$. $\square$
	\end{proof}
	
	Lemma \ref{lem:checkBGC} shows that for linear Kolmogorov PDEs satisfying Assumption \ref{assum:driftDiffusion}, the corresponding functions $F(\cdot)$ in Framework \ref{frame:lpwui} satisfy the boundary growth condition with arbitrarily small $B_i>0$, we can apply Theorems \ref{thm:rqmcge} and \ref{thm:mcerrbound} to obtain the convergence rate of the mean generalization error for different sampling methods.
	
	\begin{thm}\label{thm:PDEerror}
		Under Framework \ref{frame:lpwui}. Suppose that the drift function $ \mu$ and the diffusion function $\sigma$ in \eqref{eq:target} satisfy Assumption \ref{assum:driftDiffusion}, then
		\begin{enumerate}
			\item[(i)] Suppose that $(\bm u_{i})_{i=1}^{\infty}$ is the scrambled digital sequence in base $b\geq 2$, let sample size $n=b^{m}$, $m\in \mathbb{N}$, then for arbitrarily small $\epsilon>0$ we have 
			\begin{equation}
				\mathbb{E}\left[ \mathcal{L}(f_{n,\mathcal{H}})-\mathcal{L}(f_{\mathcal{H}})\right]  = O(n^{-1+\epsilon}). \nonumber
			\end{equation}
			
			\item[(ii)] Suppose that $( \bm u_{i})_{i=1}^{\infty}$ are i.i.d samples simulated from the uniform distribution over $(0,1)^{d+D}$, then for arbitrarily small $\epsilon>0$ we have 
			\begin{equation}
				\mathbb{E}\left[ \mathcal{L}(f_{n,\mathcal{H}})-\mathcal{L}(f_{\mathcal{H}})\right]  = O(n^{-\frac{1}{2}+\epsilon}). \nonumber
			\end{equation}
		\end{enumerate}
	\end{thm}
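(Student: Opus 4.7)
The proof is a direct assembly of earlier results, so I will not need any genuinely new machinery. The plan is to match each of the three cases of Assumption \ref{assum:driftDiffusion} with its explicit representation as a function of uniform random variables, thereby casting the deep learning problem for the Kolmogorov PDE as an instance of Framework \ref{frame:lpwui}, and then invoke the generalization-error bounds of Theorems \ref{thm:rqmcge} and \ref{thm:mcerrbound}.

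First I would observe that cases (i) and (ii) in Assumption \ref{assum:driftDiffusion} admit closed-form solutions of the SDE \eqref{eq:sdeX}, yielding the representations $F^{(1)}$ in \eqref{eq:Fab1} and $F^{(2)}$ in \eqref{eq:Fab2}, both of which take uniform inputs on $(0,1)^{d+d}$ via the inverse Gaussian CDF transform. For case (iii) the Euler--Maruyama scheme with $M$ steps composes affine maps and Gaussian increments into a polynomial map $P_1,\dots,P_d$, producing $F^{(3)}$ on $(0,1)^{d+Md}$ as in \eqref{eq:Fab3}. In each case the deep learning problem for the linear Kolmogorov PDE fits exactly into Framework \ref{frame:lpwui} with the appropriate choice of $D$.

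Next I would apply Lemma \ref{lem:checkBGC}, which was already established, to conclude that each of $F^{(1)}$, $F^{(2)}$, and $F^{(3)}$ satisfies the boundary growth condition \eqref{eq:gc} with constants $B_i>0$ that may be chosen arbitrarily small. With this verification in hand, part (i) follows immediately from Theorem \ref{thm:rqmcge}: since $F$ has arbitrarily small boundary growth exponents and $(\bm u_i)_{i=1}^\infty$ is a scrambled $(t,d+D)$-sequence in base $b$ with $n=b^m$, the mean generalization error is bounded by $O(n^{-1+\epsilon})$ for any $\epsilon>0$. Part (ii) follows analogously from Theorem \ref{thm:mcerrbound}: the same arbitrarily small constants $B_i$ yield the rate $O(n^{-1/2+\sum_i B_i}(\log n)^{1/2})$, which can be absorbed into $O(n^{-1/2+\epsilon})$ for any $\epsilon>0$.

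There is no real obstacle here because Lemma \ref{lem:checkBGC} already did the technical work of bounding mixed partial derivatives via the chain rule, combined with the asymptotic estimates $\Phi^{-1}(x)=O(\sqrt{\log(\min(x,1-x))})$ and $\partial_x\Phi^{-1}(x)=O([\min(x,1-x)]^{-1})$ for the inverse Gaussian CDF. The only point requiring mild care is that in case (iii) the dimension of the integration domain is $d+Md$ rather than $2d$, so one must confirm that Theorems \ref{thm:rqmcge} and \ref{thm:mcerrbound} are applied with $D=Md$; neither rate depends on $D$ in any essential way beyond being polylogarithmic, so the conclusion is unchanged. Thus the proof reduces to a short paragraph invoking Lemma \ref{lem:checkBGC} together with Theorems \ref{thm:rqmcge} and \ref{thm:mcerrbound}.
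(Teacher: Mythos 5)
Your proposal is correct and follows exactly the paper's route: the paper's proof is the one-line observation that the result "follows directly from Theorems \ref{thm:rqmcge}, \ref{thm:mcerrbound} and Lemma \ref{lem:checkBGC}," which is precisely the assembly you describe (with the boundary growth verification already delegated to Lemma \ref{lem:checkBGC}). Your additional remarks on matching each case of Assumption \ref{assum:driftDiffusion} to $F^{(1)}$, $F^{(2)}$, $F^{(3)}$ and on taking $D=Md$ in case (iii) are accurate elaborations of what the paper leaves implicit.
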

	\begin{proof}
		This follows directly from Theorems \ref{thm:rqmcge}, \ref{thm:mcerrbound} and Lemma \ref{lem:checkBGC}. $\square$
	\end{proof}
	
	As discussed in Section \ref{sec:2.1}, the mean estimation error from the ERM principle $$\mathbb{E}\left[ \mathcal{E}(f_{n,\mathcal{H}})\right] =\mathbb{E}\left[(b-a)^{-d}\norm{f_{n,\mathcal{H}}-f^{*}}_{L^{2}([a,b]^{d})}^{2}\right] $$ can be decomposed into 
	\begin{equation}
		\underbrace{\mathbb{E}\left[\mathcal{L}(f_{n,\mathcal{H}})-\mathcal{L}(f_{\mathcal{H}})\right] } _{\text{mean generalization error}}+\underbrace{(b-a)^{-d}\norm{f_{\mathcal{H}}-f^{*}}_{L^{2}([a,b]^{d})}^{2}}_{\text{approximation error}}. \nonumber
	\end{equation}
	The approximation error is independent of the sample $(\bm u_{i})_{i=1}^{n}$, hence Theorem \ref{thm:PDEerror} shows that we can achieve asymptotically smaller mean estimation error as sample size $n\to \infty$ if we simulate scrambled digital nets instead of i.i.d. uniform points. 
	\section{Numerical Experiments}\label{sec:numeriExp}
	In this section, we conduct numerical experiments to show the potential advantages obtained by using scrambled digital nets instead of i.i.d. uniform random numbers in the deep learning algorithm for solving linear Kolmogorov PDEs. Due to the low computational cost of generating uniform samples on unit cube, we follow \cite{Berner:2020b,Richter2022} to simulate new uniform samples for each batch during training. To address the influence of the sampling methods, we ensure that other settings of the algorithms are same. The activation function of artificial neural network is chosen to be the swish function \eqref{eq:swish}, and we initialize the artificial neural network by means of the Xavier initialization \cite{glorot2010understanding}. Moreover we add batch normalization layers \cite{ioffe2015batch} to enhance model robustness. For training the networks, we follow the training settings from \cite{Berner:2020b}. More specifically, we use the Adam optimizer \cite{kingma:2017adam} with default weight decay $0.01$ and piece-wise constant learning rate. The detailed settings of the deep learn algorithm are given in Table~\ref{tab1}. 
	
	To compare the performance of the deep learning algorithms based on different sampling methods, we use the relative $L^{2}$ error defined by
	$$\sqrt{\frac{\int_{[a,b]^{d}}\left(\Phi_{\theta}(x)-u^{*}(T,x)\right)^{2}\mathrm{d}x}{\int_{[a,b]^{d}}\left(  u^{*}(T,x)\right)  ^{2}\mathrm{d}x}},$$
	where $\Phi_{\theta}$ is the output of the neural network after training and $u^{*}(T,\cdot)$ is the exact solution. Since the relative $L^{2}$ error cannot be computed explicitly, we approximate it via MC methods that is
	\begin{equation} \label{eq:L2}
		\sqrt{\frac{m^{-1}\sum_{i=1}^{m}\left(\Phi_{\theta}(x_{i})-u^{*}(T,\bm x_{i})\right)^{2}}
			{m^{-1}\sum_{i=1}^{m}\left( u^{*}(T,\bm x_{i})\right)^{2}}}, 
	\end{equation}
	where the sample size $m\in \mathbb{N}$ and $(\bm x_{i})_{i=1}^{m}$ are i.i.d. samples with uniform distribution on $[a,b]^{d}$.

		Deep learning algorithms typically fine-tune the parameters to obtain good results. To fairly compare two different sampling methods, we use the same hyperparameter setting listed in Table~\ref{tab2}. For each batchsize, we select the parameter that yield the smallest relative $L^{2}$ error on the validation data. We implement the experiment in PyTorch, for more details, please refer to  \href{https://github.com/JcXiao1/LinearKolmogorovPDE_RQMC}{https://github.com/JcXiao1/LinearKolmogorovPDE\_RQMC} for more details.
	
	\begin{table}[tbhp]
		\caption{Training settings}	\label{tab1}
		\begin{center}
			\begin{tabular}{lll}
				\hline
				& Heat equation & Black-Scholes model   \\
				\hline
				\textbf{Input} & &    \\
				dimension & dim = 5, 50 & dim = 5, 50 \\
				$[a,b]$ & $[0,1]$ & $[4.5,5.5]$ \\
				T & 1 & 1 \\
				\hline
				\textbf{Network} & &    \\
				width & 4$\times$dim & 4$\times$dim\\
				depth & 6 & 6\\
				activation & swish & swish  \\
				norm layer & batch norm. & batch norm. \\
				init & Xvaier & Xvaier \\
				\hline
				\textbf{Training} & &    \\
				batchsize & $2^{10},2^{11},\dots,2^{18}$ & $2^{10},2^{11},\dots,2^{18}$\\
				optimizer & AdamW & AdamW \\
				weight decay &0.01&0.01\\
				\hline
			\end{tabular}
		\end{center}
	\end{table}
	
	\begin{table}[tbhp]
		\caption{Training hyperparameter}	\label{tab2}
		\begin{center}
			\begin{tabular}{ll}
				\hline
				initial learning rate & $10^{-1},10^{-2},10^{-3} $\\
				lr decay ratio & $0.2, 0.5$  \\
				lr decay patience & $4000, 8000$ \\
				iterations & 60000 \\
				\hline
			\end{tabular}
		\end{center}
	\end{table}
	
	\subsection{Heat equation}\label{sec:4.1}
	We first consider the following heat equation with paraboloid initial condition
	\begin{equation}\label{eq:heateq}
		\begin{cases}
			\frac{\partial u}{\partial t}(t,x) = \Delta_{x}u(t,x), \ (t,x) \in [0,T]\times [a,b]^{d} ,	\\
			u(0,x)=\left\lVert x\right\rVert^{2}_{2}, \ x \in [a,b]^{d}.
		\end{cases}
	\end{equation}
	
	The exact solution $u^*(t,x)$ is given by
	\begin{equation}
		u^{*}(t,x)= \left\lVert x\right\rVert^{2}_{2}+2dt, \ (x,t) \in  [0,T]\times [a,b]^{d}. \nonumber
	\end{equation}
	In our experiments, we choose $[a,b]=[0,1]$ and $T=1$. To approximate the relative $L^{2}$ error, we choose $m=2^{16}$ in \eqref{eq:L2} and compute the exact solsution directly for both validation data and test data. 
	
	For different batchsizes in dimensions $5$ and $50$, Figures~\ref{fig:heat}(a) and \ref{fig:heat}(b) present the relative $L^{2}$ error on the test data, where the best model is selected based on validation data. Both figures demonstrate the superiority of the RQMC sampling method over the crude MC, and such superiority is more significant in dimension 5. In dimension 5, MC-based deep learning algorithm achieves $L^{2}$ relative error $ 5.287\times 10^{-4}$ with batchsize $2^{16}$ while RQMC-based one only requires batchsize $2^{11}$ to achieve relative error $5.082\times 10^{-4}$. In dimension 50, the advantages become slightly less impressive in the sense that MC-based deep learning algorithm requires batchsize $2^{13}$ while RQMC-based one only requires batchsize $2^{12}$ to achieve $L^{2}$ relative error close to $10^{-3}$. In general, as in other applications, the superiority of RQMC methods becomes weaker in higher dimensions .
	
	\begin{figure}[tbhp]
		\centering
		\subfloat[$d=5$]{
			\includegraphics[width=0.4\linewidth]{./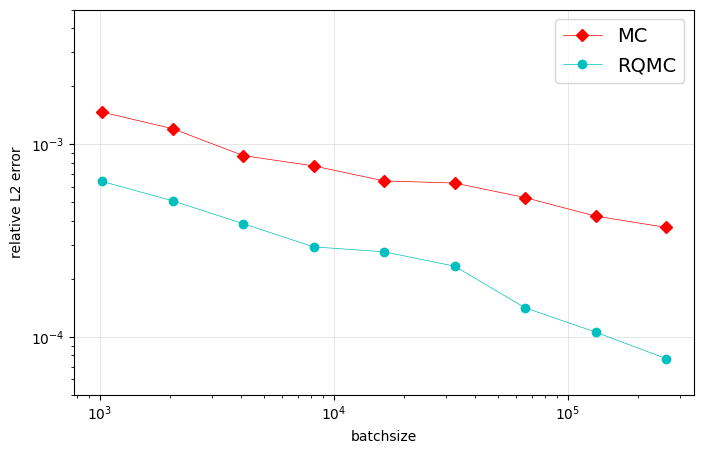}}
		\hspace{2em}
		\subfloat[$d=50$]{
			\includegraphics[width=0.4\linewidth]{./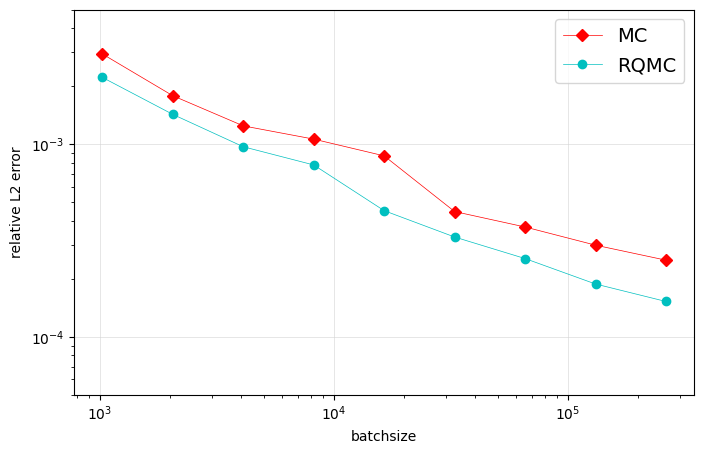}}
		\caption{Average relative $L^{2}$ error vs. batchsize  for solving heat equation in dimension 5 and 50.}
		\label{fig:heat}
	\end{figure}
	
	Figure~\ref{fig:heatTrain}(a) and \ref{fig:heatTrain}(b) present the relative $L^{2}$ error during the training process of artificial neural networks with batchsize $2^{18}$. We observe that applying RQMC sampling methods leads to a more accurate artificial neural network with smaller relative error. In $d = 5$, for the MC method, the standard deviation of the relative error over the last 100 iterations is $2.419\times 10^{-6}$, while it is only $3.210\times 10^{-7}$ for the RQMC method. In $d = 50$, the standard deviations of the relative error over last 100 iterations for RQMC and MC are $7.730\times 10^{-7}$ and $2.244\times 10^{-6}$ respectively. This suggests that the training process of the RQMC-based deep learning algorithm is more stable.
	
	\begin{figure}[tbhp]
		\centering
		\subfloat[$d=5$]{
			\includegraphics[width=0.4\linewidth]{./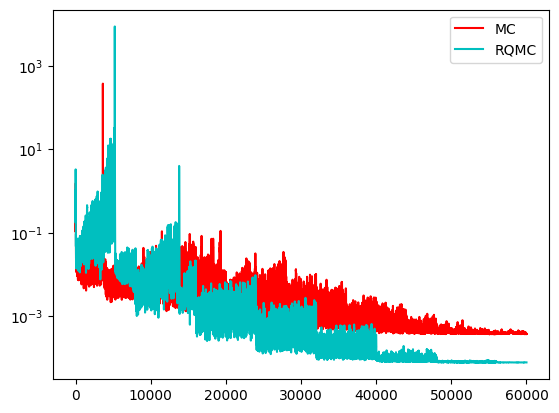}
		}
		\hspace{2em}
		\subfloat[$d=50$]{
			\includegraphics[width=0.4\linewidth]{./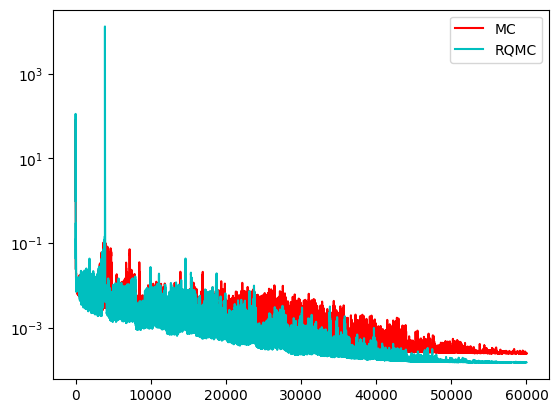}
		}
		\caption{The relative $L^{2}$ error of the training process for solving heat equation in dimension 5 and 50, the batchsize is chosen to be $2^{18}$.}
		\label{fig:heatTrain}
	\end{figure}
	
	In dimension 50, we consider the projection of the exact solution and the trained neural networks on $(0,1)^{2}$ with $x_{i} = 1/2,i=3,\dots,50$. Comparing Figures~\ref{fig:heatSolution}(a) and \ref{fig:heatSolution}(b), we find that the RQMC-based deep learning algorithm with batchsize $2^{10}$ achieves lower approximation error across the whole region than that of MC-based one. From Figures~\ref{fig:heatSolution}(b) and \ref{fig:heatSolution}(d), the RQMC-based deep learning algorithm does indeed numerically solve the heat equation \eqref{eq:heateq} with high precision on the projection space. 
	
	\begin{figure}[t]
		\centering
		\subfloat[The point-wise absolute error for MC-based deep learning with batchsize $2^{18}$.]{\label{im:heatMC}
			\includegraphics[width=0.4\linewidth]{./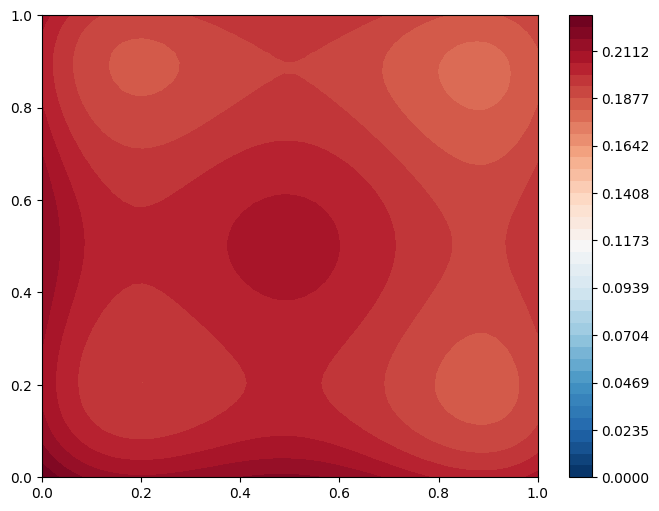}
		}
		\hspace{2em}
		\subfloat[The point-wise absolute error for RQMC-based deep learning with batchsize $2^{18}$.]{\label{im:heatQMC}
			\includegraphics[width=0.4\linewidth]{./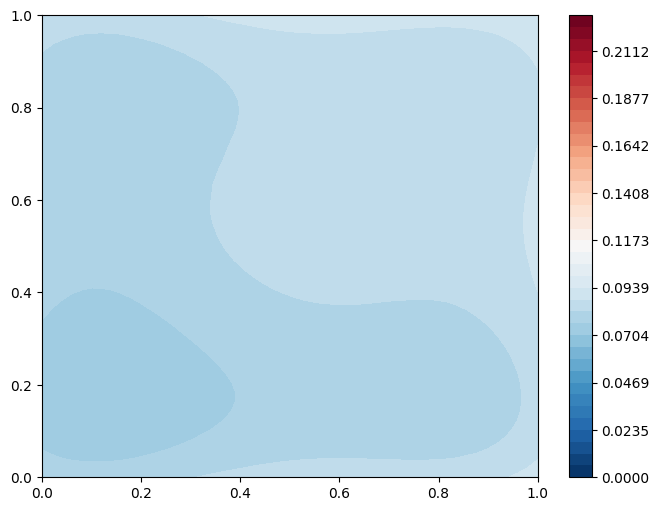}
		}
		
		\subfloat[The exact solution.]{
			\includegraphics[width=0.4\linewidth]{./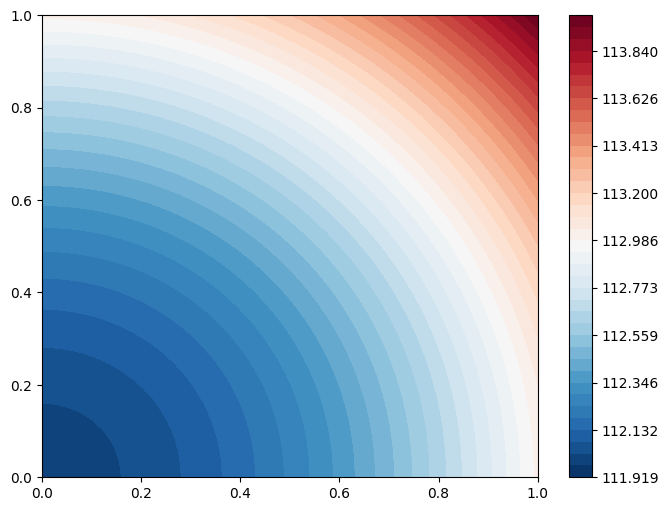}
		}
		\hspace{2em}
		\subfloat[The output function of RQMC-based deep learning with batchsize $2^{18}$.]{
			\includegraphics[width=0.4\linewidth]{./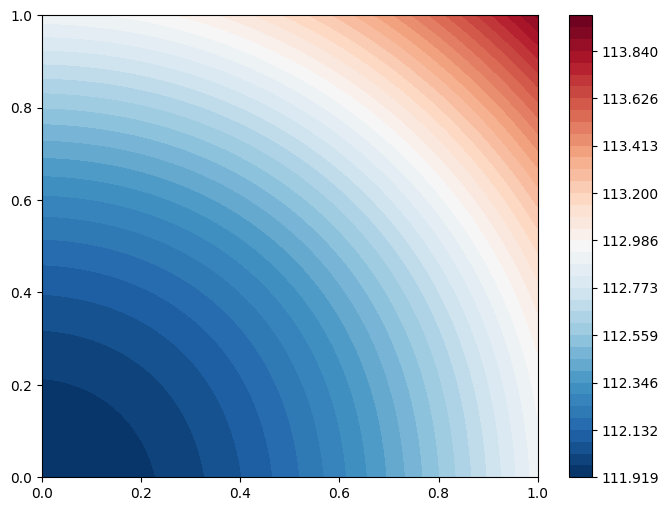}
		}
		\caption{The projection on $(0,1)^{2}$ with $x_{i} = 1/2,i=3,\dots,50$. The exact solution and the absolute error are calculated point-wisely  on a $50\times 50$ uniform grid.}
		\label{fig:heatSolution}
	\end{figure}

	\subsection{Black-Scholes model}\label{sec:4.2}
	Next, we consider the Black-Scholes PDE with correlated noise (see \cite[Section 3.4]{Beck:2018} and \cite[Section 3.2]{Richter2022}) defined by
	\begin{equation} \label{eq:BS}
		\frac{\partial u}{\partial t}(t,x) =\frac{1}{2} \sum\limits_{i = 1}^{d} \sum\limits_{j = 1}^{d} x_{i}x_{j}\left(\sigma \sigma^{T}\right)_{i,j} \frac{\partial^{2} u}{\partial x_{i}\partial x_{j}}(t,x) + \mu\sum\limits_{i = 1}^{d} x_i\frac{\partial u}{\partial x_{i}}(t,x), 
	\end{equation}
	where $\sigma \in \mathbb{R}^{d\times d}, \mu \in \mathbb{R}$. For different choice of the initial functions $\varphi(\cdot)$, Black-Scholes PDEs can model the pricing problem for different types of European options. Let $T,K \in (0,\infty)$ and the initial condition 
	\begin{equation}\label{eq:BSinit}
		\varphi(x)= \exp\left(-\mu T\right)
		\max \left\{ 0,K-\mathop{\min}\limits_{1\le i \le d}x_{i} \right\}, \ x \in [a,b]^{d}.
	\end{equation}
	Then the solution $u^{*}(t,x)$ solves the problem of pricing a rainbow European put option. By Feynman-Kac formula, we know that for every $(t,x)\in [0,T]\times [a,b]^{d}$ it holds that
	\begin{eqnarray}\label{eq:exactsolution}
		u^{*}(t,x)=\mathbb{E}\left[\varphi\left(S_{t,1}^{x},S_{t,2}^{x},\dots,S_{t,d}^{x}\right)\right] ,
	\end{eqnarray}
	where $$S_{t,i}^{x}= x_{i} \exp\left( \left( \mu-\frac{1}{2}\norm{\sigma_i}_{2}^{2}\right) t+\seq{\sigma_i,B_{t}}_{\mathbb{R}^{d}}\right), \ \ 1\leq i \leq d,$$
	$\sigma_i$ is the $i$-th row of $\sigma$ and $\set{B_{t}\mid 0\leq t\leq T}$ is a standard $d$-dimensional Brownian motion.

	In our experiments we choose the same setting in \cite[section 3.2]{Richter2022}, let $T=1$, $[a,b]=[4.5,5.5]$, $\mu=-0.05$, $K=5.5$. Let 
	$$\sigma =\mathrm{diag}(\beta_{1},\beta_{2},\dots,\beta_{d})C $$
	where $\beta_{i} = 0.1+i/(2d)$ and $C$ is the lower triangular matrix arsing from the Cholesky decomposition $CC^{*}=Q$ with $Q_{i,i}=1$ and $Q_{i,j}=0.5$, $i \neq j \in \set{1,2,\dots,d}$. To approximate the relative $L^{2}$ error, we choose $m=2^{16}$, for every sample $\bm x_{i}$, we approximate the exact solution via MC methods with sample size $M = 2^{24}$ for both validation data and test data. .

	\begin{figure}[tbhp]
		\centering
		\subfloat[$d=5$]{
			\includegraphics[width=0.4\linewidth]{./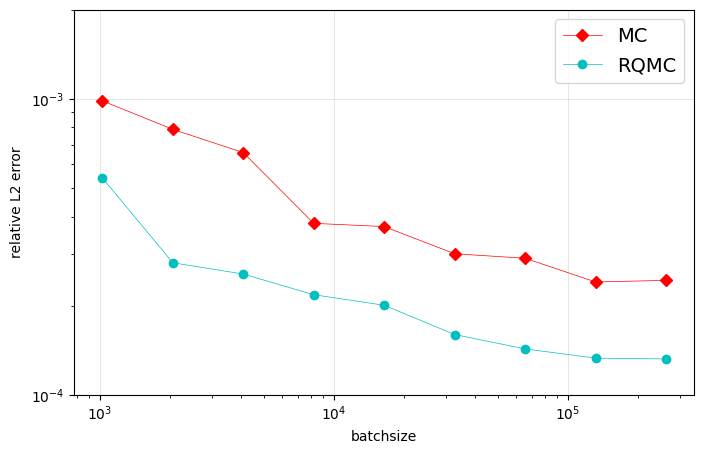}
		}
		\hspace{2em}
		\subfloat[$d=50$]{
			\includegraphics[width=0.4\linewidth]{./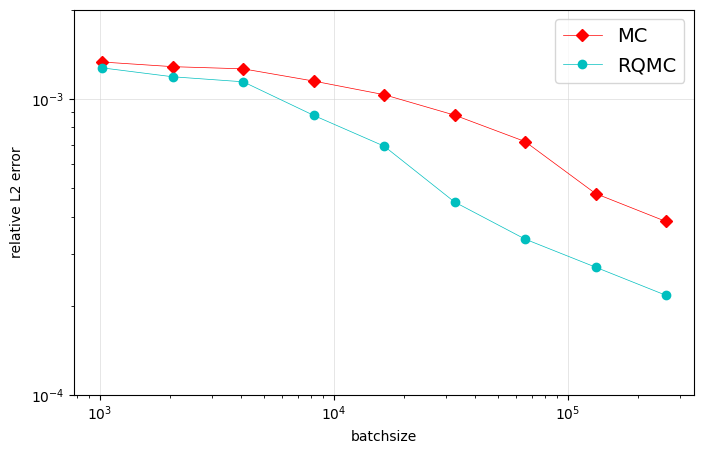}
		}
		\caption{Average relative $L^{2}$ error vs. batchsize  for solving Black-Scholes PDE in dimension 5 and 50.}
		\label{fig:BS}
	\end{figure}
	
	Same as the heat equation, Figure~\ref{fig:BS}(a) and \ref{fig:BS}(b) present the relative $L^{2}$ error on the test data in $d = 5,50$, where the best model is selected based on validation data. Both figures demonstrate the superiority of the RQMC sampling method over the ordinary MC. In dimension 5, MC-based deep learning algorithm requires batchsize $2^{16}$ to achieve $L^{2}$ relative error $2.892\times 10^{-4}$  while RQMC-based one only requires batchsize $2^{12}$ to achieve $L^{2}$ relative error $2.557\times 10^{-4}$. In dimension 50, the advantages become slightly less impressive for small batchsizes, and the MC-based deep learning algorithm requires batchsize $2^{15}$ while RQMC-based one only requires batchsize $2^{13}$ to achieve $L^{2}$ relative error less than $10^{-3}$. Figure~\ref{fig:BSTrain}(a) and \ref{fig:BSTrain}(b) present the relative $L^{2}$ error during a training process of artificial neural networks with batchsize $2^{18}$. From figures, we find that applying RQMC sampling method leads to a more accurate and stable networks.
	
	\begin{figure}[tbhp]
		\centering
		\subfloat[$d=5$]{
			\includegraphics[width=0.4\linewidth]{./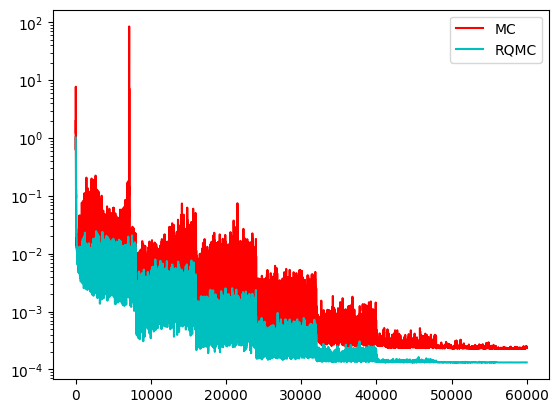}}
		\hspace{2em}
		\subfloat[$d=50$]{
			\includegraphics[width=0.4\linewidth]{./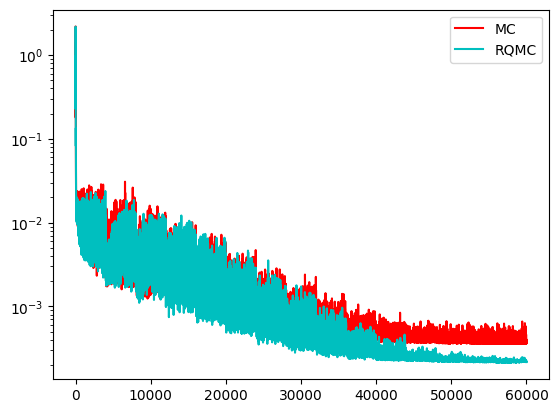}
		}
		\caption{The relative $L^{2}$ error of the training process for solving Black-Scholes PDE in dimension 5 and 50, the batchsize is chosen to be $2^{18}$.}
		\label{fig:BSTrain}
	\end{figure}
	
	In dimension 5, we consider the projection of the exact solution and the trained neural networks on $(4.5,5.5)^{2}$ with $x_{i} = 5,i=3,4,5$. Comparing Figures~\ref{fig:BSSolution}(a) and \ref{fig:BSSolution}(b), we find that the RQMC-based deep learning algorithm with batchsize $2^{15}$ achieves achieves lower approximation error across the whole region than that of MC-based one. From Figures~\ref{fig:BSSolution}(b) and \ref{fig:BSSolution}(d), the RQMC-based deep learning algorithm with batchsize $2^{18}$ is highly effective in solving the Black-Scholes PDE \eqref{eq:BS} with initial condition \eqref{eq:BSinit} on the projection space.

	\begin{figure}[tbhp]
		\centering
		\subfloat[The point-wise absolute error for MC-based deep learning with batchsize $2^{18}$.]{
			\includegraphics[width=0.4\linewidth]{./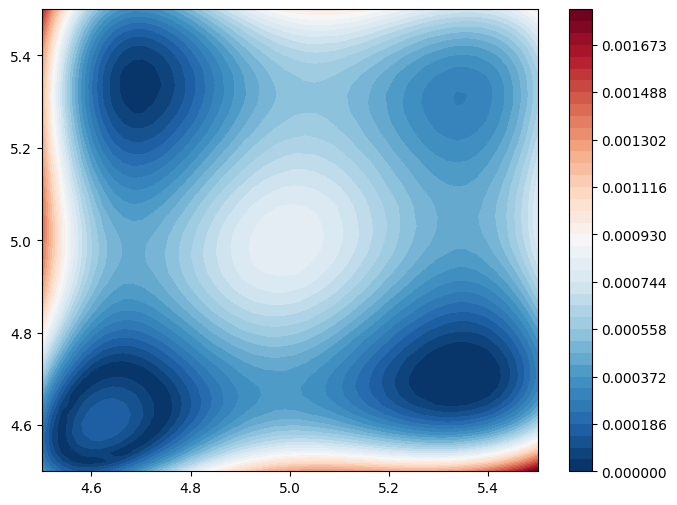}
		}
		\hspace{2em}
		\subfloat[The point-wise absolute error for RQMC-based deep learning with batchsize $2^{18}$.]{
			\includegraphics[width=0.4\linewidth]{./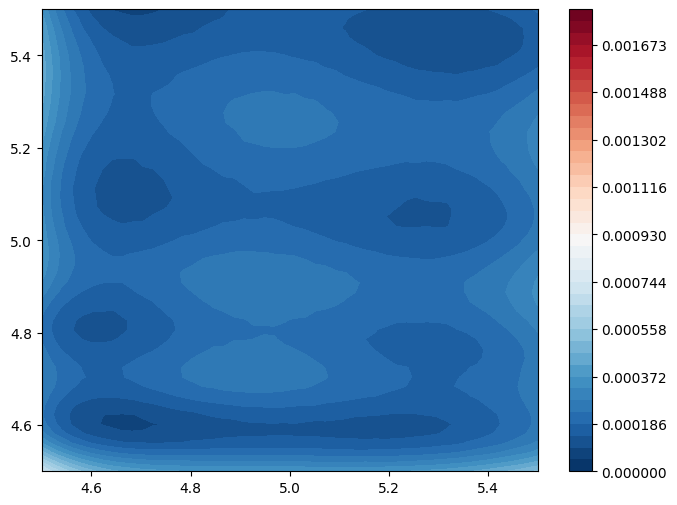}
		}
		
		\subfloat[The exact solution]{
			\includegraphics[width=0.4\linewidth]{./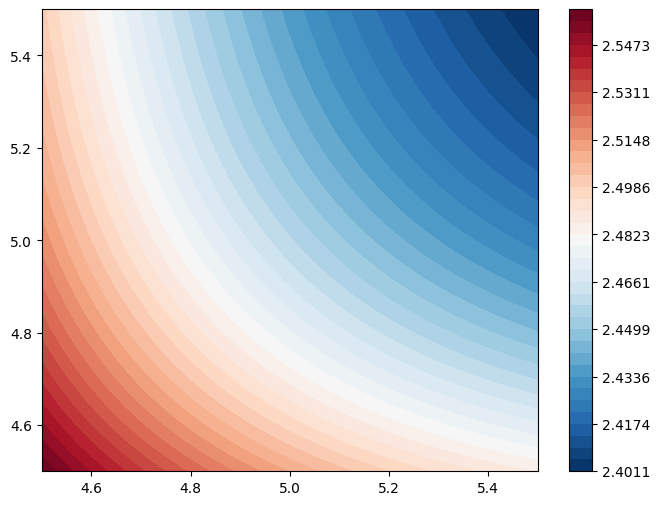}
		}
		\hspace{2em}
		\subfloat[The output function of RQMC-based deep learning with batchsize $2^{18}$.]{
			\includegraphics[width=0.4\linewidth]{./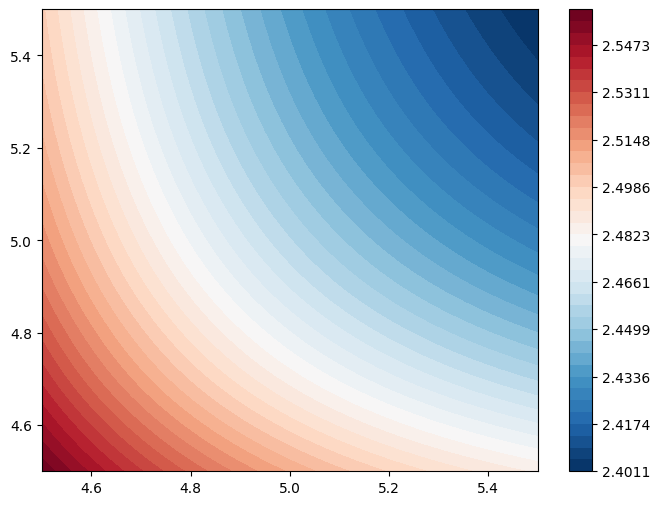}
		}
		\caption{The projection on $(4.5,5.5)^{2}$ with $x_{i} = 5,i=3,4,5$. The exact solution and absolute error are calculated point-wisely on a $50\times 50$ uniform grid. We use $2^{23}$ $RQMC$ samples to approximate the exact solution at each grid point.}
		\label{fig:BSSolution}
	\end{figure}
	\section{Conclusion}\label{sec:conclu}
	The numerical approximation of solutions to linear Kolmogorov PDEs can be reformulated as a deep learning problem under Framework \ref{frame:lpwui}. In the general framework,  the empirical loss function completely depends on the uniform data. Typically, the data are supposed i.i.d.. In this paper, we suggest that the scrambled digital sequences may be a better choice. 
	
	We decompose the error resulted from the ERM into the the generalization error and the approximation error. Since the approximation error is independent of the training data, we focus on the generalization error for different sampling strategies. For MC methods that use i.i.d. samples, we prove that the convergence rate of the mean generalization error is $O(n^{-1/2+\epsilon})$ for arbitrarily small $\epsilon>0$ if function $F$ satisfies the boundary growth condition \eqref{eq:gc} for arbitrarily small constants. For RQMC methods that use the scrambled digital sequence as the uniform data, the convergence rate of the mean generalization error becomes $O(n^{-1+\epsilon})$ for arbitrarily small $\epsilon>0$ which is asymptotically better than that of MC methods. We conduct numerical experiments to show the potential advantages obtained by replacing i.i.d. uniform data with scrambled digital sequences in the deep learning algorithm for solving linear Kolmogorov PDEs. Numerical results show that the RQMC-based deep learning algorithm outperforms the MC-based one in terms of the accuracy and robustness. 
	
	The numerical results demonstrate that we need larger batchsize to show the advantages of RQMC-based deep learning algorithm as the dimension $d$ becomes larger. One possible way to explain this phenomenon is to study on how the implied constant in the rate $O(n^{-1+\epsilon})$ and $O(n^{-1/2+\epsilon})$ depends on the dimension $d$. There is also an apparent gap between the error from the ERM and the error from stochastic gradient descent~(SDG) type algorithms such as Adam optimizer. However, the error analysis for SGD type algorithms usually relies on the convexity of the loss function, which does not actually hold for the loss function defined in Framework \ref{frame:lpwui}. For other deep learning algorithms for solving PDEs such as  Physics-informed Neural Networks and Deep Galerkin Method, If the training data can be generated with the uniform samples on unit cube, we guess that replacing the i.i.d. uniform sample with the scrambled digital net may still improve the accuracy and efficiency of the deep learning algorithms. The theoretical error analysis and relevant numerical experiments are left as future research.
	\section*{Acknowledgments}
	This work is supported by the National Natural Science Foundation of China through grant 72071119.
	
	\appendix
	\renewcommand{\thelem}{\Alph{section}\arabic{lem}}
	\renewcommand{\thedefn}{\Alph{section}\arabic{defn}}
	\section{Low variation extension}\label{appendix: lve}
	
	To prove Theorem \ref{thm:errqmc}, we need to introduce concept of the low variation extension \cite{Owen:2005,Owen:2006}. For every $\eta \in (0,1/2)$, let
	\begin{equation}\label{eq:Keta}
		K(\eta)=\set{u\in (0,1)^d\mid \underset{1\leq j\leq d}{\mathrm{\min}}\min(u_{j},1-u_{j}) \geq \eta }
	\end{equation}
	be the set avoiding the boundary of $[0,1]^d$ with distance $\eta>0$. Let $c = (1/2,\cdots,1/2)\in\mathbb{R}^d$, according to \cite[Proposition 25]{Owen:2005}, the ANOVA type decomposition of a function $g$ is given by
	\begin{equation}
		\label{eq:anova}
		g(u) = g(c) + \sum_{v\neq \emptyset}\int_{[c^v,u^v]}D_vg(z^v:c^{-v})dz^v,
	\end{equation} 
	where $z^v:c^{-v}$ denotes the point $y$ with $y_j=z_j$ for $j\in v$ and $y_j=c_j$ for $j\notin v$, $dz^v$ denotes $\prod_{i\in v}dz_i$. Based on \eqref{eq:anova}, the low variation extension $g_{\eta}(u)$ of $g(u)$ from $K(\eta)$ to $(0,1)^d$ is defined by 
	\begin{equation}
		\label{eq:lve}
		g_{\eta}(u) = g(c) + \sum_{v\neq \emptyset}\int_{[c^v,u^v]}\bm 1\{z^v:c^{-v}\in K(\eta)\}D_v g(z^v:c^{-v})dz^v.
	\end{equation}
	
	For the low variation extension \eqref{eq:lve}, Owen \cite{Owen:2006} proves some useful properties which are stated in the next lemma.
	\begin{lem}\label{lem:lveBasicproperty}
		Suppose that  $\mathcal{G}$ is a class of real-valued functions defined on $(0,1)^d$ which satisfies the boundary growth condition \eqref{eq:gc} with constants $(B_i)_{i=1}^{d}$. Let $\eta\in (0,1/2)$ and $K(\eta)$ be the set avoiding boundary defined by \eqref{eq:Keta}. For function $g\in \mathcal{G}$, let $g_{\eta}(u)$ be the low variation extension of $g(u)$ from $K(\eta)$ to $(0,1)^d$ defined by \eqref{eq:lve}. Then
		\begin{enumerate}
			\item[(i)] for every $\eta\in(0,1/2)$, $u \in K(\eta)$ and $g\in \mathcal{G}$, we have
			$$g_{\eta}(u)=g(u).$$
			\item[(ii)] there exists a constant $C_{1}>0$ such that for every $u \in (0,1)^d - K(\eta)$, $\eta\in(0,1/2)$ and $g\in \mathcal{G}$,
			\begin{equation}
				\abs{g_{\eta}(u)-g(u)}\leq C_1\prod_{i=1}^d[\min(u_i,1-u_i)]^{-B_i}. \nonumber
			\end{equation}
			\item[(iii)] there exists a constant $C_{2}>0$ such that for every $g\in \mathcal{G}$ and $\eta\in(0,1/2)$, 
			\begin{equation}
				V_{\mathrm{HK}}\left( g_{\eta}\right) \leq  C_{2}\eta^{-\max_{i}B_i}. \nonumber
			\end{equation}
		\end{enumerate}
	\end{lem}
	
	The next lemma states other properties of low variation extensions that are necessary to prove Theorems \ref{thm:errqmc} and \ref{thm:mcerrbound}.
	\begin{lem}\label{lem:lveproperty}
		Under the setting of Lemma \ref{lem:lveBasicproperty}, suppose that $ \bm u_1,\dots, \bm u_n$ are uniformly distributed on $(0,1)^d$. Then
		\begin{enumerate}
			\item[(i)] there exists a constant $C_{3}>0$ such that for every $\eta\in(0,1/2)$,
			\begin{eqnarray*}
				\E{\underset{g\in \mathcal{G}}{\mathrm{sup}}\left( \frac{1}{n}\sum_{i=1}^{n}\abs{g_{\eta}( \bm u_i)-g(\bm u_i)}\right)}  &\leq& C_3 \eta^{1-\max_{i}B_i},\\ 
				\underset{g\in \mathcal{G}}{\mathrm{sup}}\int_{(0,1)^d}\abs{ g_{\eta}( u)-g( u)}d  u  &\leq& C_3 \eta^{1-\max_{i}B_i}.
			\end{eqnarray*}
			\item[(ii)] there exists a constant $C_{4}>0$ such that for every $\eta\in(0,1/2)$ and $g\in \mathcal{G}$,
			\begin{equation}
				\underset{u \in (0,1)^d}{\mathrm{sup}}\abs{g_{\eta}(u)} \leq C_4 \eta^{-\sum_{i=1}^{d}B_i}. \nonumber
			\end{equation}
		\end{enumerate}
	\end{lem}
	\begin{proof}
		Define $m(x)=\min(x,1-x)$ for $x\in \mathbb{R}$. From $(ii)$ of Lemma \ref{lem:lveBasicproperty}, we have
		\begin{eqnarray*}
			\underset{g\in \mathcal{G}}{\mathrm{sup}}\left( \frac{1}{n}\sum_{i=1}^{n}\abs{g_{\eta}(\bm u_i)-g( \bm u_i)}\right)  &\leq& \frac{1}{n}\sum_{i=1}^{n}\underset{g\in \mathcal{G}}{\mathrm{sup}}\abs{g_{\eta}(\bm u_i)-g( \bm u_i)}  \\
			&\leq&\frac{C_1}{n}\sum_{i=1}^{n}\prod_{j=1}^d[m( u_{i,j})]^{-B_j}1\{\bm u_i \notin K(\eta) \}.
		\end{eqnarray*}
		Taking expectations on both sides of the above inequality, then it suffices to show that for every $1\leq i \leq n$,
		\begin{eqnarray*}
			&&C_1\E{\prod_{j=1}^d[m( u_{i,j})]^{-B_j}1\{\bm  u_{i} \notin K(\eta) \}} \\
			&\leq& C_1 \sum_{j=1}^{d} \int_{[0,\eta]\cup[1-\eta,1]}[m( u_{i,j})]^{-B_j}d  u_{i,j}\int_{(0,1)^{\set{-j}}}\prod_{k \in \set{-j}}[m( u_{i,j})]^{-B_k}d  u_{i,k} \\
			&=&C_1\sum_{j=1}^{d}\frac{2\eta^{1-B_{j}}}{1-B_{j}}\prod_{k \in \set{-j}}\frac{2^{B_{k}}}{1-B_{k}} \leq 2C_1\left( \prod_{k=1}^d\frac{2^{B_{k}}}{1-B_{k}}\right) \sum_{j=1}^{d}\eta^{1-B_{j}}\\
			&\leq& 2dC_1\left( \prod_{k=1}^d\frac{2^{B_{k}}}{1-B_{k}}\right) \eta^{1-\max_{i}B_{i}} = C_2 \eta^{1-\max_{i}B_{i}},
		\end{eqnarray*}
		where the first inequality follows from extending the integration region to $(0,1)^{\set{-j}}$ for the rest arguments when $j$-th argument is fixed. It is easy to use the same techniques to prove that
		\begin{eqnarray*}
			\underset{g\in \mathcal{G}}{\mathrm{sup}}\int_{(0,1)^d}\abs{ g_{\eta}( u)-g( u)}d  u &\leq& 	\int_{(0,1)^d}C_{1}\prod_{j=1}^d[m( u_{j})]^{-B_j}1\{ u \notin K(\eta) \}d u \\
			&\leq& C_2 \eta^{1-\max_{i}B_{i}}.
		\end{eqnarray*}
		
		For $(ii)$, from the expression \eqref{eq:lve} and the growth condition \eqref{eq:gc}, we have
		\begin{eqnarray*}
			\abs{g_{\eta}(u)} &\leq& \abs{g(c)}+\sum_{v\neq \emptyset}\int_{K(\eta)}\abs{D_vg(z^v:c^{-v})}dz^v \\
			&\leq&\abs{g(c)}+B\sum_{v\neq \emptyset}2^{\sum_{j\notin v}B_j}\prod_{j\in v}\int_{\eta}^{1-\eta}[m(z_{j})]^{-B_j-1}dz_j\\
			&\leq&B2^{\sum_{j=1}^dB_j}+B2^{\sum_{j=1}^dB_j}\sum_{v\neq \emptyset}\prod_{j\in v}\frac{\eta^{-B_j}}{B_j} \\
			&=&B2^{\sum_{j=1}^dB_j}\prod_{j=1}^d\left(  \frac{\eta^{-B_j}}{B_j}+1\right) \\
			&\leq& B\left[ \prod_{i=1}^{d}2^{B_j}\left( 1+\frac{1}{B_{j}}\right) \right] \eta^{-\sum_{j=1}^{d}B_j}= C_3 \eta^{-\sum_{j=1}^{d}B_j}.
		\end{eqnarray*}
		
		The proof is completed. $\square$
	\end{proof}
	
	\section{Rademacher complexity}\label{appendix:rc}
	
	\begin{defn}[Rademacher complexity]\label{defn:rc}
		Suppose $\set{\sigma_i}_{i=1}^{n}$ are i.i.d Rademacher variables defined on the probability space $(\Omega,\mathcal{F},\mathbb{P})$ with discrete distribution $\mathbb{P}\{\sigma_i=1\}=\mathbb{P}\{\sigma_i=-1\}=1/2$. Suppose that $( \bm x_{i})_{i=1}^{n}$ are i.i.d random samples on $(\Omega,\mathcal{F},\mathbb{P})$ and take values in a metric space $S$. Suppose $\mathcal{G}$ is a class of real-valued borel-measurable function over $(S,\mathcal{B}(S))$ ($\mathcal{B}(S)$ is the Borel $\sigma$-field). The Rademacher complexity of a function class $\mathcal{G}$ with respect to the random sample $(\bm x_{i})_{i=1}^{n}$ is defined by
		\begin{equation}
			R_{n}(\mathcal{\mathcal{G}};( \bm x_{i})_{i=1}^{n}) = \E{\underset{g \in \mathcal{G}}{\mathrm{sup}}\frac{1}{n}\sum_{i=1}^n\sigma_i g(\bm  x_{i})}. \nonumber	
		\end{equation}
	\end{defn}
	
	The following lemmas state some useful properties of the Rademacher complexity which are applied to bound the mean generalization error.
	\begin{lem}\label{lem:rcbasic}
		Under the setting of Definition \ref{defn:rc}, we have
		\begin{equation}
			\E{\underset{g \in \mathcal{G}}{\mathrm{sup}}\abs{\frac{1}{n}\sum_{i=1}^n g( \bm x_{i})-\E{g( x)}}} \leq 2R_{n}(\mathcal{\mathcal{G}};( \bm x_{i})_{i=1}^{n}) .\nonumber
		\end{equation}
	\end{lem}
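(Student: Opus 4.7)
The plan is to establish this classical symmetrization inequality via the ghost-sample trick. I would introduce an independent copy $(\bm x_i')_{i=1}^n$ of the training sample, i.i.d.\ from the same distribution and independent of both $(\bm x_i)_{i=1}^n$ and the Rademacher sequence $(\sigma_i)_{i=1}^n$. Using the identity $\E{g(x)}=\E{\tfrac{1}{n}\sum_{i=1}^n g(\bm x_i')}$ to replace the population mean, and applying Jensen's inequality (both $\abs{\cdot}$ and $\sup_g$ are convex/monotone operations) to pull the ghost-sample expectation outside the absolute value and supremum, I obtain
$$\E{\sup_{g\in\mathcal{G}}\abs{\tfrac{1}{n}\sum_i g(\bm x_i)-\E{g(x)}}} \leq \E{\sup_{g\in\mathcal{G}}\abs{\tfrac{1}{n}\sum_i\bigl(g(\bm x_i)-g(\bm x_i')\bigr)}}.$$

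Next I would symmetrize. Because $(\bm x_i,\bm x_i')$ is componentwise exchangeable, the joint law of the random process $\{g(\bm x_i)-g(\bm x_i')\}_{i=1}^n$ (indexed by $g\in\mathcal{G}$) is invariant under independent sign flips, so the right-hand side equals $\E{\sup_g\abs{\tfrac{1}{n}\sum_i\sigma_i(g(\bm x_i)-g(\bm x_i'))}}$ after inserting the Rademacher variables. Splitting by the triangle inequality and using $\bm x_i\stackrel{d}{=}\bm x_i'$, the bound becomes at most $2\E{\sup_g\abs{\tfrac{1}{n}\sum_i\sigma_i g(\bm x_i)}}$. Finally, to match Definition~\ref{defn:rc}, I would invoke $\sup_g\abs{a_g}\leq\sup_g a_g+\sup_g(-a_g)$ together with the Rademacher symmetry $(-\sigma_i)\stackrel{d}{=}(\sigma_i)$, so that both resulting expected suprema equal $R_n(\mathcal{G};(\bm x_i)_{i=1}^n)$.

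The main delicate point is tracking the inner absolute value, since Definition~\ref{defn:rc} defines $R_n$ with no $\abs{\cdot}$ inside the supremum: the ghost-sample argument most naturally produces a bound in terms of $\E{\sup_g\abs{\tfrac{1}{n}\sum_i\sigma_i g(\bm x_i)}}$, and converting this cleanly to $R_n(\mathcal{G};(\bm x_i)_{i=1}^n)$ relies on executing the Rademacher symmetry step carefully. The other ingredients—Jensen's inequality, exchangeability of $(\bm x_i,\bm x_i')$, and the triangle inequality—are routine bookkeeping.
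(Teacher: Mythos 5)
The paper does not actually prove this lemma: it defers to \cite[Lemma~26.2]{Shalev:2014}, which is precisely the ghost-sample symmetrization you outline, so your overall route is the intended one. Your first three steps are sound and yield
\[
\E{\underset{g\in\mathcal{G}}{\mathrm{sup}}\abs{\frac{1}{n}\sum_{i=1}^{n}g(\bm x_i)-\E{g(x)}}}\;\leq\;2\,\E{\underset{g\in\mathcal{G}}{\mathrm{sup}}\abs{\frac{1}{n}\sum_{i=1}^{n}\sigma_i g(\bm x_i)}}.
\]

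The gap is the final conversion, which you correctly single out as the delicate point but which does not go through as written. First, the pointwise inequality $\sup_g\abs{a_g}\leq\sup_g a_g+\sup_g(-a_g)$ is false in general: if $a_g>0$ for every $g$, then $\sup_g(-a_g)<0$ and the right-hand side can be strictly smaller than the left. It holds only when $\min\left(\sup_g a_g,\sup_g(-a_g)\right)\geq 0$, e.g.\ when $0\in\mathcal{G}$ or $\mathcal{G}=-\mathcal{G}$. Second, even granting it, you would land at $2\cdot(R_n+R_n)=4R_n$, not $2R_n$: the factor $2$ from the ghost-sample triangle inequality has already been spent before you split the inner absolute value. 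In fact no argument can close this gap for arbitrary $\mathcal{G}$, because the stated bound fails for a singleton class $\mathcal{G}=\{g\}$ with nonconstant $g$: there the left-hand side is positive while $R_n(\{g\};(\bm x_i)_{i=1}^{n})=0$. What the ghost-sample argument proves with constant $2$ is the one-sided bound $\E{\sup_g\left(\frac{1}{n}\sum_i g(\bm x_i)-\E{g(x)}\right)}\leq 2R_n$ (this is the cited Lemma~26.2); the two-sided version requires either the absolute value inside the Rademacher complexity or extra structure on $\mathcal{G}$ (for the paper's classes $\mathcal{H}$ and $\mathcal{H}^2$, which contain the zero function, one salvages the bound with constant $4$, which is all that matters for the rates). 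So your instinct about where the difficulty lies is right, but the step as proposed would fail.
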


	\begin{lem}\label{lem:rcbound}
		Under the setting of Definition \ref{defn:rc}. Suppose $w$ is a bounded measurable real-valued function over $(S,\mathcal{B}(S))$, then we have 
		\begin{equation}
			R_{n}(w\cdot\mathcal{G};(\bm x_{i})_{i=1}^{n})\leq \underset{x \in S}{\mathrm{sup}}\abs{w(x)}R_{n}(\mathcal{G};( \bm x_{i})_{i=1}^{n}),\nonumber		\end{equation}
		where $w\cdot\mathcal{G}$ denotes the function class $\set{w\cdot g\mid g\in \mathcal{G}}$.
	\end{lem}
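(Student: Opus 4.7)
The plan is to reduce the weighted Rademacher sum to the unweighted one in two conditional steps: first absorb the sign of $w$ using the symmetry of the Rademacher variables, then dominate the resulting non-negative weights by the uniform bound $M := \sup_{x\in S}|w(x)|$ via a coordinate-wise convexity argument. I may assume $M > 0$, since otherwise the inequality is immediate.

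First I would condition on the sample $(\bm x_i)_{i=1}^n$. Because $(\sigma_i)_{i=1}^n$ are independent and symmetric, the sequence $(\sigma_i\,\mathrm{sign}(w(\bm x_i)))_{i=1}^n$ has the same joint law as $(\sigma_i)_{i=1}^n$, so
$$\mathbb{E}_\sigma \sup_{g\in\mathcal{G}} \sum_{i=1}^n \sigma_i w(\bm x_i) g(\bm x_i) = \mathbb{E}_\sigma \sup_{g\in\mathcal{G}} \sum_{i=1}^n \sigma_i |w(\bm x_i)| g(\bm x_i).$$
Setting $\lambda_i := |w(\bm x_i)|/M \in [0,1]$, it then suffices to prove that
$$F(\lambda_1,\ldots,\lambda_n) := \mathbb{E}_\sigma \sup_{g\in\mathcal{G}} \sum_{i=1}^n \sigma_i \lambda_i g(\bm x_i) \leq F(1,\ldots,1).$$

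For this I rely on two properties of $F$. \emph{Coordinate-wise convexity:} fixing the other $\lambda_j$, $F$ is an expectation of a supremum of functions affine in $\lambda_i$, and is therefore convex in $\lambda_i$. \emph{Monotonicity from $0$ to $1$:} writing $T_g := \sum_{j\neq i} \sigma_j \lambda_j g(\bm x_j)$ and averaging over $\sigma_i$,
$$F(\lambda_i=1) = \tfrac{1}{2}\mathbb{E}\bigl[\sup_g(g(\bm x_i)+T_g)+\sup_g(-g(\bm x_i)+T_g)\bigr] \geq \mathbb{E}\sup_g T_g = F(\lambda_i=0),$$
where the inequality uses $\sup_g A_g + \sup_g B_g \geq \sup_g(A_g+B_g)$ with $A_g + B_g = 2T_g$. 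Convexity on $[0,1]$ then gives $F(\lambda_i) \leq (1-\lambda_i)F(0) + \lambda_i F(1) \leq F(1)$, and iterating coordinate by coordinate produces $F(\lambda_1,\ldots,\lambda_n) \leq F(1,\ldots,1)$.

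The conclusion follows by multiplying through by $M/n$ and taking the outer expectation over the sample, yielding $R_n(w\cdot\mathcal{G};(\bm x_i)_{i=1}^n) \leq \sup_x |w(x)| \cdot R_n(\mathcal{G};(\bm x_i)_{i=1}^n)$. The main delicacy is orchestrating the symmetry and convexity reductions conditionally on the sample; once arranged in this order, the remaining steps are routine and I foresee no substantive obstacle.
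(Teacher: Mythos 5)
Your argument is correct. Note, however, that the paper does not actually prove this lemma: it only cites \cite[Lemma 5.2]{Jiao:2021} (alongside \cite[Lemma 26.2]{Shalev:2014} for the companion lemma), so there is no in-paper proof to compare against. What you supply is the standard self-contained argument for the multiplier inequality: condition on the sample, absorb the signs of $w(\bm x_i)$ into the Rademacher variables by symmetry, and then push the normalized weights $\lambda_i=|w(\bm x_i)|/M\in[0,1]$ to the extreme point $(1,\dots,1)$ using coordinate-wise convexity of $\lambda\mapsto\mathbb{E}_\sigma\sup_g\sum_i\sigma_i\lambda_i g(\bm x_i)$ together with $F(\lambda_i=0)\le F(\lambda_i=1)$. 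This is in effect the scalar-multiplier special case of the Ledoux--Talagrand contraction principle, proved directly; each step (the symmetry in law of $(\sigma_i\,\mathrm{sign}(w(\bm x_i)))_i$ conditionally on the sample, convexity of an expectation of suprema of affine functions, the subadditivity bound $\sup_g A_g+\sup_g B_g\ge\sup_g(A_g+B_g)$, and the coordinate-by-coordinate iteration) checks out. The only point worth tightening is the degenerate case $w(\bm x_i)=0$, where $\mathrm{sign}(w(\bm x_i))=0$ is not a Rademacher sign; this is harmless because the $i$-th term vanishes on both sides, but you should either adopt the convention $\mathrm{sign}(0):=1$ or note explicitly that such indices contribute nothing. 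With that cosmetic fix the proof is complete and, arguably, an improvement on the paper, which leaves the reader to chase an external reference.
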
   
	
	For the rigorous proofs of Lemmas \ref{lem:rcbasic} and \ref{lem:rcbound}, we refer the readers to \cite[Lemma~26.2]{Shalev:2014} and \cite[Lemma5.2]{Jiao:2021}, respectively. Under Framework \ref{frame:lpwui}, the function class $\mathcal{G}$ is the restriction of an artificial neural networks over $S = [a,b]^{d}$. With reference to \cite[Theorem 5.13]{Jiao:2021}, the next two lemmas provide the upper bound for the specific Rademacher complexity.
	
	\begin{lem}\label{lem:RandANN}
		Under the setting of Definition \ref{defn:rc}. Let $p \in \mathbb{N}$, $B>0$, let $\Theta \subseteq \mathbb{R}^{p}$ and $\norm{\theta}_{2}\leq B$ for $\theta\in \Theta$. Suppose that there exists a surjective operator $\mathcal{R}:(\Theta,\norm{\cdot}_{2}) \to \mathcal{G}$ with image $\mathcal{G}$. Suppose that there exist constants $B_{1}$ and $L_{1}$ such that for all $\theta_{1},\theta_{2}\in \Theta$, $g\in \mathcal{G}$ it holds that
		\begin{eqnarray*}
			\underset{x \in S}{\mathrm{sup}}\abs{\mathcal{R}(\theta_{1})(x)-\mathcal{R}(\theta_{2})(x)} 
			&\leq& L_{1}\norm{\theta_{1}-\theta_{2}}_{2}, \\
			\underset{x \in S}{\mathrm{sup}}\abs{g(x)}
			&\leq& B_{1}.
		\end{eqnarray*}
		Then we have
		\begin{eqnarray*}
			R_{n}(\mathcal{G};( x_{i})_{i=1}^{n})&\leq& \frac{4}{\sqrt{n}}+\frac{6\sqrt{p}B_{1}}{\sqrt{n}}\sqrt{\log\left(2L_{1}B\sqrt{n} \right) }\\
			&=&O(n^{-\frac{1}{2}}\left( \log n\right) ^{\frac{1}{2}}).
		\end{eqnarray*}
	\end{lem}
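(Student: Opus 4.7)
The plan is to prove this bound by a standard covering-number argument: combine a Lipschitz-induced $\epsilon$-net on $\mathcal{G}$ with Massart's finite-class lemma for the Rademacher averages of a finite set, and control the residual approximation by $\epsilon$.

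The first step is to construct an $\epsilon$-net of $\mathcal{G}$ in the uniform norm over $S$. Because $\Theta$ lies in the Euclidean ball of radius $B$ in $\mathbb{R}^{p}$, a volumetric argument supplies an $(\epsilon/L_{1})$-net $T_{\epsilon}\subseteq\Theta$ with $|T_{\epsilon}|\leq(1+2BL_{1}/\epsilon)^{p}$. Applying the surjection $\mathcal{R}$ pointwise and invoking its Lipschitz property produces a finite set $N_{\epsilon}:=\mathcal{R}(T_{\epsilon})\subseteq\mathcal{G}$ of the same cardinality, with the property that every $g\in\mathcal{G}$ admits some $g_{0}\in N_{\epsilon}$ satisfying $\sup_{x\in S}|g(x)-g_{0}(x)|\leq\epsilon$. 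Surjectivity of $\mathcal{R}$ is exactly what permits lifting the parameter-space cover to a function-space cover.

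Next I decompose the empirical Rademacher sum as
$$\sup_{g\in\mathcal{G}}\frac{1}{n}\sum_{i=1}^{n}\sigma_{i}g(x_{i})\leq\sup_{g_{0}\in N_{\epsilon}}\frac{1}{n}\sum_{i=1}^{n}\sigma_{i}g_{0}(x_{i})+\epsilon,$$
where the $\epsilon$ bounds the contribution of $g-g_{0}$ since $|\sigma_{i}|=1$. Taking expectation and applying Massart's finite-class lemma to the vectors $(g_{0}(x_{1}),\dots,g_{0}(x_{n}))\in\mathbb{R}^{n}$, each of Euclidean norm at most $B_{1}\sqrt{n}$ by the uniform sup-bound on $\mathcal{G}$, yields
$$R_{n}(\mathcal{G};(x_{i})_{i=1}^{n})\leq B_{1}\sqrt{\frac{2p\log(1+2BL_{1}/\epsilon)}{n}}+\epsilon.$$
Choosing $\epsilon$ as a small constant multiple of $n^{-1/2}$ (for instance $\epsilon=4/\sqrt{n}$) balances the two contributions and produces a bound of the announced order $n^{-1/2}(\log n)^{1/2}$ with the stated coefficients.

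No deep obstacle arises in this plan. Surjectivity of $\mathcal{R}$ is what lets a Euclidean cover of $\Theta$ become a sup-norm cover of $\mathcal{G}$, and the uniform bound $B_{1}$ supplies Massart's lemma with the required Euclidean radius. The only mildly delicate step is matching the explicit constants $4$ and $6\sqrt{p}$ appearing in the statement, which is a careful arithmetic simplification of $B_{1}\sqrt{2p\log(1+2BL_{1}/\epsilon)}$ after the optimal choice of $\epsilon$, together with the trivial estimate $\log(1+2BL_{1}\sqrt{n})\leq\log(2BL_{1}\sqrt{n})+\log 2$; this introduces no new idea.
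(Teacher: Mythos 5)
Your argument is correct and gives the claimed rate, but note first that the paper does not prove this lemma at all: it is quoted from Jiao et al.\ (2021, Theorem 5.13), and the explicit constants $4/\sqrt{n}$ and $6\sqrt{p}B_{1}/\sqrt{n}$ in the statement betray the proof used there, namely Dudley's entropy-integral (chaining) bound $R_{n}\leq 4\delta+\tfrac{12}{\sqrt{n}}\int_{\delta}^{B_{1}/2}\sqrt{\log\mathcal{C}(\epsilon,\mathcal{G},\Vert\cdot\Vert_{\infty})}\,d\epsilon$ with $\delta=1/\sqrt{n}$, the covering number being controlled exactly as you do via the Lipschitz surjection $\mathcal{R}$ and the volumetric bound on the Euclidean ball. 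Your route replaces the multi-scale chaining step by a single-scale discretization plus Massart's finite-class lemma; every step is sound (the net of cardinality $(1+2BL_{1}/\epsilon)^{p}$, the lift to a sup-norm cover via surjectivity and Lipschitzness, the $\epsilon$ residual since $\vert\sigma_{i}\vert=1$, and Massart with radius $B_{1}\sqrt{n}$). What you lose is only cosmetic: your bound is $\tfrac{4}{\sqrt{n}}+B_{1}\sqrt{2p\log(1+BL_{1}\sqrt{n}/2)/n}$, which is of the same order and is dominated by the stated expression once $2L_{1}B\sqrt{n}$ is bounded away from $1$ (as the statement implicitly requires for the square root of the logarithm to make sense), but it does not literally reproduce the coefficients $4$ and $6\sqrt{p}$, which are artifacts of the chaining constant $12$ rather than of any arithmetic simplification of your expression. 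What you gain is a more elementary, self-contained argument; since only the asymptotic order $O(n^{-1/2}(\log n)^{1/2})$ is used downstream (in Theorems \ref{thm:grrc} and \ref{thm:mcerrbound}), your proof would serve the paper equally well.
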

	
	\begin{lem}\label{lem:rc_neural}
		Suppose that $\mathcal{H}=\mathcal{N}^{a,b}_{\rho,R,S}$ and $x_i\in[a,b]^d$ for every $i \in \{1,2,\cdots,n\}$, then we have
		\begin{equation}\label{eq:rcH}
			R_{n}(\mathcal{H};( x_{i})_{i=1}^{n})=O(n^{-\frac{1}{2}}\left( \log n\right) ^{\frac{1}{2}}), \nonumber
		\end{equation}
		similarly, we can prove that 
		\begin{equation}\label{eq:rcH2}
			R_{n}(\mathcal{H}^{2};( x_{i})_{i=1}^{n})=O(n^{-\frac{1}{2}}\left( \log n\right) ^{\frac{1}{2}}), \nonumber
		\end{equation}
		where $\mathcal{H}^2$ denotes the function class $\set{h^2\mid h\in \mathcal{H}}$.
	\end{lem}
	\begin{proof}
		For $\mathcal{H}=\mathcal{N}^{a,b}_{\rho,R,S}$, the surjective operator $\mathcal{R}$ in Lemma \ref{lem:RandANN} is defined by
		$$\mathcal{R}:\theta \in \Theta_{R} \to	W_{L}\circ\rho \circ W_{L-1}\circ\rho\cdots\circ\rho\circ W_{1}(x)|_{x\in [a,b]^{d}},$$
		see details in Definition \ref{dfn:ann}. From the smoothness of swish function $\rho$, it is obvious that $\mathcal{R}(\theta)(x)\in C^{\infty}(\Theta_{R}\times[a,b]^{d},\mathbb{R})$, thus there exist constant $B$ and $L$ such that for any $\theta_{1},\theta_{2}\in \Theta_{R}$
		\begin{equation}\label{eq:neural_property}
			\underset{x \in [a,b]^{d}}{\mathrm{sup}}\abs{\mathcal{R}(\theta_{1})(x)-\mathcal{R}(\theta_{2})(x)} \leq L\norm{\theta_{1}-\theta_{2}}_{2}, \quad \underset{x \in [a,b]^{d}}{\mathrm{sup}}\abs{\mathcal{R}(\theta_{1})(x)} \leq B.
		\end{equation}
		
		The proof is completed by applying Lemma \ref{lem:RandANN}.
	\end{proof}

\end{document}